\theoremstyle{definition}
\newtheorem{theorem}{Theorem}[section]
\newtheorem{proposition}[theorem]{Proposition}
\theoremstyle{definition}
\theoremstyle{definition}
\newtheorem{remark}[theorem]{Remark}
\newtheorem{example}[theorem]{Example}
\begin{document}
	\baselineskip=16.5pt
	\title[]{involutions on the product of quaternionic projective space  and  sphere}
	\author[Dimpi and Hemant Kumar Singh]{ Dimpi and Hemant Kumar Singh}
	\address{ Dimpi \newline 
		\indent Department of Mathematics\indent \newline\indent University of Delhi\newline\indent 
		Delhi -- 110007, India.}
	\email{dimpipaul2@gmail.com}
	\address{  Hemant Kumar Singh\newline\indent 
		Department of Mathematics\newline\indent University of Delhi\newline\indent 
		Delhi -- 110007, India.}
	\email{hemantksingh@maths.du.ac.in}

	\date{}
	\thanks{ The first author of the paper is  supported by SRF of UGC, New Delhi, with  reference no.: 201610039267.}
	\begin{abstract} 
		
		Let $G=\mathbb{Z}_2$ act on a  finite  CW-complex $X$ having mod $2$  cohomology isomorphic to  the product of quaternionic projective space and sphere $\mathbb{H}P^n\times \mathbb{S}^m, n,m\geq 1.$ This paper is concerned with the connected  fixed point sets and the orbit spaces of free involutions  on $X.$

	\end{abstract}
	\subjclass[2020]{Primary 57S17; Secondary 55M35}
	
	\keywords{Fixed Point Sets; Orbit Spaces;  Fibration; Totally nonhomologous to zero; Leray-Serre spectral sequence.}

	\maketitle
	\section {Introduction}
	\noindent  Let $(G,X)$ be a transformation group, where $G$ is compact Lie group  and $X$ is finite CW-complex,  with the fixed point set $F.$ The study of the cohomological structure of the fixed point set and orbit space has been an interesting problem in transformation groups.  Smith \cite{s} proved that the fixed point sets of $G=\mathbb{Z}_p,$ $p$ a prime, on a finite  dimensional polyhedron $X$ having mod $p$ cohomology $n$-sphere are  mod $p$  cohomology $r$-sphere, where $-1\leq r \leq n.$ He has also proved that if $G=\mathbb{Z}_2$ acts effectively on the real projective space then the fixed point set is either empty or it has two components having mod $2$ cohomology real projective space \cite{smith}.  Bredon \cite{ Bredon} generalizes this result for $G=\mathbb{Z}_p,~ p$ a prime, actions on  cohomology projective spaces.   Bredon \cite{Bredon} also proved   that if a finitistic space $X$  satisfies poincar\'{e} duality with respect to  \v{C}ech cohomology with $\mathbb{Z}_p$-coefficients then each component of the fixed point set also satisfies poincar\'{e}  duality.  Puppe \cite{puppe} proved Bredon's  conjecture which states that  if $X$ is totally nonhomologous to zero in $X_G$ (Borel space) then the number of generators of the cohomology ring of each component of the fixed point set with $\mathbb{Z}_p$-coefficient  is  at most the number of generators of $H^*(X).$ The fixed point sets of involutions on the product of projective spaces and Dold manifolds have been determined in \cite{chang, peltier}. \\
	\indent On the other hand, it is well known  that the orbit spaces of free actions of  $\mathbb{Z}_2,~ \mathbb{S}^1$ and $\mathbb{S}^3$ on  $\mathbb{S}^n,$ $\mathbb{S}^{2n+1}$ and $\mathbb{S}^{4n+3}$ are projective spaces $\mathbb{P}^n(q),$ where $q=1,2$ and $4,$ respectively.  Recently, the orbit spaces of free involutions on real Milnor manifolds, Dold manifolds, and the product of two projective spaces have been discussed in \cite{dey,Mattos,msingh}. The possibilities of connected fixed point sets of involutions and the orbit spaces of free involutions on the product of projective spaces and sphere $\mathbb{F}P^n \times \mathbb{S}^m,\mathbb{F}=\mathbb{R}$ or $\mathbb{C},$ have been determined in \cite{dimpi}.
In continuation, in this paper, we have determined the possibilities of the connected fixed point sets of involutions on $X\sim_2 \mathbb{H}P^n \times \mathbb{S}^m$ and discussed  the  orbit spaces of free involutions on $X.$ 
	
	\section{Preliminaries}  
	
	\noindent In this section, we recall  some known facts that will be used in this paper.  Let $G=\mathbb{Z}_p$ act on a finite CW-complex  $X.$ Let  $ G\hookrightarrow E_G  \rightarrow B_G$ be universal $G$-bundle, where  $E_G$ is contractible space and $B_G$ is finite CW-complex. Then the projection map $X\times E_G \rightarrow E_G$ is $G$-equivalent map and gives a  fibration $ X \stackrel{i} \hookrightarrow X_G \stackrel{\pi} \rightarrow B_G$ (called  Borel fibration), where $X_G = (X\times E_G)/G$ is  Borel space obtained by diagonal action of $G$ on space $X\times E_G.$ Suppose $F\neq \emptyset,$ and  let $x \in F$ and $\eta_x : B_G \hookrightarrow X_G$ be a cross section of projection map $\pi: X_G \rightarrow B_G ,$ where $B_G \approx (\{x\} \times E_G)/G ,$ then $H^*(X_G)\cong
	ker~\eta^*_x \oplus im~\pi^*.$  The induced homomorphism $\eta_x^*$ depends on the component $F_0$ of fixed point set $F$ in which $x$ lies. If $\alpha \in H^n(X_G)$ such that $\alpha \in Ker~ \eta_x^*$ then the image of $\alpha$ under the  restriction of $j: (F_G,x_G) \hookrightarrow (X_G,x_G)$ on $(F_0)_G$ does not  involved the elements of $H^0(F_0,x_G)$ \cite{G.Bredon}.\\
	Recall that a space $X$ is said to be  totally nonhomologous to zero (TNHZ)
	in $X_G$ if the inclusion map $i: X \hookrightarrow X_G$ induces
	a surjection in the cohomology $ i^*: H^*(X_ G) \rightarrow H^*(X). $ We
	have used the following Propositions:
	
	\begin{proposition}(\cite{bredon})
		Let $G=\mathbb{Z}_2$  act on  a finite CW-complex $X$ and $\sum$ rk $H^i(X,\mathbb{Z}_2)< \infty.$ Then, the following statements are equivalent: \\
		(a) $X$ is TNHZ (mod 2) in $X_G.$   \\
		(b) $\sum$ rk $H^i(F,\mathbb{Z}_2) =\sum$ rk $H^i(X,\mathbb{Z}_2).$ \\ 
		(c) $G$ acts trivially on $H^*(X; \mathbb{Z}_2)$ and spectral sequence $E^{r,q}_2$ of $X_G \rightarrow B_G$ degenerates. 
	\end{proposition}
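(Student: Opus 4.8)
The plan is to analyze the Leray--Serre spectral sequence $\{E_r^{p,q}, d_r\}$ of the Borel fibration $X \stackrel{i}\hookrightarrow X_G \stackrel{\pi}\to B_G$ with $\mathbb{Z}_2$-coefficients, which converges to $H^*(X_G;\mathbb{Z}_2)$ and has $E_2^{p,q} = H^p(B_G; \mathcal{H}^q(X;\mathbb{Z}_2))$, where $\mathcal{H}^q$ denotes the local system determined by the $\pi_1(B_G)=G$-action on $H^q(X;\mathbb{Z}_2)$. Since $B_G = B_{\mathbb{Z}_2} = \mathbb{R}P^\infty$ has $H^*(B_G;\mathbb{Z}_2)=\mathbb{Z}_2[t]$ with $\deg t = 1$, the spectral sequence is a module over $\mathbb{Z}_2[t]$ via $\pi^*$, and the base-row classes $E_2^{p,0}$ are permanent cycles. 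The crucial remark is that $i^*$ factors as the fibre edge homomorphism $H^q(X_G;\mathbb{Z}_2)\twoheadrightarrow E_\infty^{0,q}\hookrightarrow E_2^{0,q}=H^0(B_G;\mathcal{H}^q(X;\mathbb{Z}_2))=H^q(X;\mathbb{Z}_2)^G$ followed by the inclusion of invariants into $H^q(X;\mathbb{Z}_2)$. First I would use this to prove (a) $\Leftrightarrow$ (c).

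For (a) $\Rightarrow$ (c): because the image of $i^*$ always lands in $H^q(X;\mathbb{Z}_2)^G$, surjectivity of $i^*$ forces $H^q(X;\mathbb{Z}_2)^G=H^q(X;\mathbb{Z}_2)$ for all $q$, i.e. $G$ acts trivially on $H^*(X;\mathbb{Z}_2)$; hence the local system is simple and $E_2^{p,q}\cong H^p(B_G;\mathbb{Z}_2)\otimes H^q(X;\mathbb{Z}_2)$. Surjectivity then forces the edge map onto all of $E_2^{0,q}$, and since it is always onto $E_\infty^{0,q}$ we obtain $E_\infty^{0,q}=E_2^{0,q}$, so every fibre class is a permanent cycle. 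As the base classes are permanent cycles too and $E_2$ is multiplicatively generated by its base and fibre edges, the Leibniz rule propagates $d_r=0$ to the whole of $E_r$ for each $r\ge 2$, and the spectral sequence degenerates, giving (c). Conversely (c) $\Rightarrow$ (a) is immediate: triviality gives $E_2^{0,q}=H^q(X;\mathbb{Z}_2)$ and degeneration gives $E_\infty^{0,q}=E_2^{0,q}$, so $i^*$ is onto.

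Finally, to bring in (b) I would invoke the Borel localization theorem. Localizing the $\mathbb{Z}_2[t]$-module $H^*(X_G;\mathbb{Z}_2)$ at the set $\{1,t,t^2,\dots\}$ gives an isomorphism $t^{-1}H^*(X_G;\mathbb{Z}_2)\cong t^{-1}H^*(F_G;\mathbb{Z}_2)$, and since $G$ acts trivially on $F$ one has $H^*(F_G;\mathbb{Z}_2)=H^*(F;\mathbb{Z}_2)\otimes\mathbb{Z}_2[t]$, a free $\mathbb{Z}_2[t]$-module of rank $\sum_i\mathrm{rk}\,H^i(F;\mathbb{Z}_2)$. On the other hand this free rank equals $\sum_q\dim_{\mathbb{Z}_2}E_\infty^{0,q}$, the number of fibre towers surviving to $E_\infty$, and the standard chain $\sum_q\dim E_\infty^{0,q}\le\sum_q\dim E_2^{0,q}=\sum_q\dim H^q(X;\mathbb{Z}_2)^G\le\sum_q\dim H^q(X;\mathbb{Z}_2)$ yields $\sum_i\mathrm{rk}\,H^i(F;\mathbb{Z}_2)\le\sum_i\mathrm{rk}\,H^i(X;\mathbb{Z}_2)$. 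Equality holds in both inequalities of this chain precisely when the action is trivial (second inequality) and all fibre classes survive, equivalently the sequence degenerates (first inequality) — that is, exactly under (c) — which establishes (b) $\Leftrightarrow$ (c) and closes the cycle. I expect the main obstacle to be the bookkeeping in this localization step: identifying the free rank of $t^{-1}H^*(X_G;\mathbb{Z}_2)$ with $\sum_q\dim E_\infty^{0,q}$ and checking that each inequality is an equality iff the corresponding clause of (c) holds, since this requires tracking the $\mathbb{Z}_2[t]$-module structure of the abutment through its filtration, not merely its total dimension.
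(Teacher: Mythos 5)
A preliminary remark: the paper never proves this proposition --- it is quoted as a known fact from \cite{bredon} in the Preliminaries --- so the only meaningful comparison is with the standard proof in that source. Your argument for (a) $\Leftrightarrow$ (c) (factoring $i^*$ through the fibre edge $H^q(X_G)\twoheadrightarrow E_\infty^{0,q}\hookrightarrow H^q(X;\mathbb{Z}_2)^G\subseteq H^q(X;\mathbb{Z}_2)$, then propagating $d_r=0$ from the two edges by the Leibniz rule) is correct and coincides with that standard proof.

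The gap is exactly where you predicted trouble: the identification of the free rank of $t^{-1}H^*(X_G;\mathbb{Z}_2)$ with $\sum_q\dim_{\mathbb{Z}_2}E_\infty^{0,q}$ is false. Counterexample: any free involution, say the antipodal map on $X=\mathbb{S}^1$. There $F=\emptyset$ and $H^*(X_G)\cong H^*(\mathbb{R}P^1)$ is bounded, hence entirely $t$-torsion, so the localized rank is $0$; yet $E_\infty^{0,0}=\mathbb{Z}_2$, so your count gives at least $1$. The point is that $\dim E_\infty^{0,q}$ counts fibre-column classes surviving the differentials, whereas the localized rank only sees what happens far out along a tower, and a surviving class in column $0$ may head a tower that dies (in the example, $E_\infty^{p,0}=0$ for $p\geq 2$). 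The identification fails for a second, independent reason when the action on $H^*(X;\mathbb{Z}_2)$ is nontrivial: writing $H^q(X;\mathbb{Z}_2)\cong\mathbb{Z}_2^{a_q}\oplus(\mathbb{Z}_2[G])^{b_q}$ as a $G$-module, one has $E_2^{0,q}=H^q(X;\mathbb{Z}_2)^G$ of dimension $a_q+b_q$, but $E_2^{p,q}=H^p(G;H^q(X;\mathbb{Z}_2))\cong\mathbb{Z}_2^{a_q}$ for $p>0$, because free summands are cohomologically trivial in positive degrees; so even a degenerate spectral sequence does not have localized rank $\sum_q\dim E_\infty^{0,q}$.

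The repair, which is how the argument actually runs in \cite{bredon}, is to count along one high diagonal rather than down the fibre column. For $N>\dim X$ the paper's Proposition 2.3 (equivalently, the localization theorem) gives $H^N(X_G)\cong H^N(F_G)$, and $F_G=F\times B_G$ gives $\dim H^N(F_G)=\sum_i \operatorname{rk} H^i(F;\mathbb{Z}_2)$. Hence
$$\sum_i \operatorname{rk} H^i(F;\mathbb{Z}_2)=\sum_{p+q=N}\dim E_\infty^{p,q}\le\sum_{p+q=N}\dim E_2^{p,q}=\sum_q a_q\le\sum_q\bigl(a_q+2b_q\bigr)=\sum_q \operatorname{rk} H^q(X;\mathbb{Z}_2).$$
Equality on the right holds iff all $b_q=0$, i.e.\ the action on $H^*(X;\mathbb{Z}_2)$ is trivial; granted that, $E_2\cong\mathbb{Z}_2[t]\otimes H^*(X;\mathbb{Z}_2)$ and multiplication by $t$ is injective on $E_2$, so equality on the left for all large $N$ forces degeneration everywhere (a nonzero $d_r$ on the first non-degenerate page would give $d_r(t^kx)=t^kd_r(x)\neq0$ in arbitrarily high total degree). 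With this substitution your chain does close (b) $\Leftrightarrow$ (c), and the rest of your proposal stands.
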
 
	\begin{proposition}(\cite{Bredon})\label{2.2}
		Let $X$ be TNHZ in $X_G$ and $\{\gamma_j\}$ be a set of homogeneous elements in $H^*(X_G;\mathbb{Z}_p)$ such that $\{i^*(\gamma_j)\}$ forms $\mathbb{Z}_p$-basis of $H^*(X; \mathbb{Z}_p).$ Then, $H^*(X_G;\mathbb{Z}_p)$ is the free $H^*(B_G)$-module generated by $\{\gamma_j\}.$
	\end{proposition}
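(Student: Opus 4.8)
The plan is to recognize this as the Leray--Hirsch theorem for the Borel fibration $X \stackrel{i}{\hookrightarrow} X_G \stackrel{\pi}{\rightarrow} B_G$, and to prove it by a filtration comparison through the Leray--Serre spectral sequence. First I would record what the hypothesis buys us: since $X$ is TNHZ in $X_G$, the argument of Proposition 2.1 (its mod-$p$ form) shows that $G$ acts trivially on $H^*(X;\mathbb{Z}_p)$ and that the spectral sequence $\{E_r^{s,t}\}$ of the fibration degenerates at the $E_2$ stage. Consequently the local coefficient system is simple and
\[
E_2^{s,t}=H^s(B_G;\mathbb{Z}_p)\otimes_{\mathbb{Z}_p}H^t(X;\mathbb{Z}_p)=E_\infty^{s,t}.
\]

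Next I would introduce the candidate isomorphism. Let $A\subseteq H^*(X_G;\mathbb{Z}_p)$ be the graded $\mathbb{Z}_p$-submodule spanned by $\{\gamma_j\}$, and consider the homomorphism of graded $H^*(B_G)$-modules
\[
\Phi:H^*(B_G;\mathbb{Z}_p)\otimes_{\mathbb{Z}_p}A\longrightarrow H^*(X_G;\mathbb{Z}_p),\qquad \Phi(b\otimes\gamma_j)=\pi^*(b)\cup\gamma_j.
\]
It suffices to prove that $\Phi$ is an isomorphism, for this is exactly the assertion that $H^*(X_G;\mathbb{Z}_p)$ is free over $H^*(B_G)$ on $\{\gamma_j\}$.

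The core step is the comparison of filtrations. I would equip the target with the decreasing filtration $F^sH^*(X_G)$ coming from the spectral sequence, and the source with the $B_G$-degree filtration $\widetilde{F}^s=\bigoplus_{u\geq s}H^u(B_G)\otimes A$. Since $\pi^*(b)\in F^uH^*(X_G)$ for $b\in H^u(B_G)$ and the filtration is multiplicative, $\Phi$ is filtration-preserving, and on associated graded modules it induces
\[
\operatorname{gr}\Phi:H^s(B_G;\mathbb{Z}_p)\otimes_{\mathbb{Z}_p}A^t\longrightarrow E_\infty^{s,t}=H^s(B_G;\mathbb{Z}_p)\otimes_{\mathbb{Z}_p}H^t(X;\mathbb{Z}_p),
\]
which is $\mathrm{id}\otimes i^*$, because the edge homomorphism $H^*(X_G)\twoheadrightarrow E_\infty^{0,*}=H^*(X)$ coincides with $i^*$ and carries $\gamma_j$ to $i^*(\gamma_j)$. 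By hypothesis $\{i^*(\gamma_j)\}$ is a $\mathbb{Z}_p$-basis of $H^*(X)$, so $i^*|_A\colon A\to H^*(X)$ is an isomorphism and therefore $\operatorname{gr}\Phi$ is an isomorphism.

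Finally I would promote this to $\Phi$ itself. Because $X$ is a finite CW-complex, $H^t(X;\mathbb{Z}_p)=0$ for $t$ large, so in each total degree $n$ only finitely many bidegrees $E_\infty^{s,n-s}$ are nonzero; hence the filtration on $H^n(X_G)$ is finite. A filtration-preserving map inducing an isomorphism on the associated graded of a finite filtration is an isomorphism, by the standard induction using the five lemma on the exact sequences $0\to F^{s+1}\to F^s\to \operatorname{gr}^s\to 0$. I expect the main obstacle to be precisely the bookkeeping in the filtration comparison: verifying that $\Phi$ is strictly filtration-preserving and that $\operatorname{gr}\Phi$ equals $\mathrm{id}\otimes i^*$ on the nose, rather than merely agreeing with it modulo higher filtration. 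This is where the multiplicative structure of the spectral sequence, the identification $\pi^*(b)\in F^uH^*(X_G)$, and the identification of the edge map with $i^*$ must all be invoked with care.
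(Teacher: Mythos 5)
The paper gives no proof of this proposition at all: it is quoted as a known result, with a citation to Bredon's work, so there is no internal argument to compare yours against. Your proof is correct and is essentially the standard Leray--Hirsch/filtration-comparison argument by which this result is established in the cited source: the TNHZ hypothesis forces trivial action on $H^*(X;\mathbb{Z}_p)$ and degeneration at $E_2$ (the mod-$p$ analogue of Proposition 2.1, which is standard even though the paper states it only for $p=2$), the map $\Phi(b\otimes\gamma_j)=\pi^*(b)\cup\gamma_j$ is filtration-preserving with $\operatorname{gr}\Phi=\mathrm{id}\otimes i^*$ by multiplicativity of the Serre filtration and the identification of the edge homomorphism with $i^*$, and the finite-filtration five-lemma induction then upgrades the associated-graded isomorphism to $\Phi$ itself.
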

	\begin{proposition}\label{thm 2}(\cite{bredon})
		Let $G = \mathbb{Z}_2$ act on the finite CW-complex
		$X$ and  $A \subset X$ be closed and invariant subspace. Suppose that $H^i ( X , A ; \mathbb{Z}_2)
		= 0$ for $i > n.$ Then, the homomorphism       
		$$j ^*: H^ k ( X_G ,A_G; \mathbb{Z}_2) \rightarrow H^k(F_G, F_G \cap A_G; \mathbb{Z}_2)$$
		is an isomorphism for $k > n$. If $( X , A )$ is TNHZ (mod $2$) in $( X_ G , A_G,)$ then $j^*$ is a monomorphism for all $k.$ 
	\end{proposition}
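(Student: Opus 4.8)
The plan is to deduce both assertions from the long exact cohomology sequence of the triple $(X_G, (A\cup F)_G, A_G)$, combined with a vanishing theorem for the relative Borel cohomology supported off the fixed set, where $G$ acts freely. Since $A$ and $F=X^{G}$ are closed and invariant, $(A\cup F)_G$ and $A_G$ sit inside $X_G$, and fibrewise excision over $B_G$ gives $H^{k}((A\cup F)_G, A_G;\mathbb{Z}_2)\cong H^{k}(F_G,(F\cap A)_G;\mathbb{Z}_2)=H^{k}(F_G, F_G\cap A_G;\mathbb{Z}_2)$, under which the restriction $H^{k}(X_G,A_G)\to H^{k}((A\cup F)_G,A_G)$ is precisely $j^{*}$. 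Exactness of the triple then shows that $j^{*}$ is an isomorphism for $k>n$ provided $H^{k}(X_G,(A\cup F)_G;\mathbb{Z}_2)=0$ for $k>n$, and that in every degree $\ker j^{*}$ is the image of $p\colon H^{k}_G(X,A\cup F)\to H^{k}_G(X,A)$. So everything reduces to controlling $H^{*}_G(X,A\cup F)$.

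The key step is a free-action vanishing lemma. Since $G$ acts freely on $X\setminus F$, I would take an invariant open neighbourhood $N$ of $F$ that equivariantly deformation retracts onto $F$ (available from the $G$-CW structure), so that homotopy invariance together with excision of the closed invariant set $F$ yields $H^{*}_G(X,A\cup F)\cong H^{*}_G\bigl(X\setminus F,(A\setminus F)\cup(N\setminus F)\bigr)$, the Borel cohomology of a pair with free $G$-action. For a free action the Borel space is homotopy equivalent to the orbit space, so this equals $H^{*}(Y,B)$ for the finite-dimensional orbit pair $(Y,B)$, whose fibre cohomology is $H^{*}(X,A\cup F)$ and hence vanishes above degree $n$ (the hypothesis kills $H^{i}(X,A)$ for $i>n$, and the fixed set carries no mod $2$ cohomology in degrees $>n$). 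To obtain the sharp bound I would run the Gysin (Smith) sequence of the double cover $X\setminus F\to Y$ classified by $t\in H^{1}(Y;\mathbb{Z}_2)$: for $k>n$ the upstairs term vanishes, which forces $\cup t\colon H^{k}(Y,B)\to H^{k+1}(Y,B)$ to be simultaneously surjective and injective, hence an isomorphism; since $Y$ is finite-dimensional these groups are eventually zero, so $H^{k}(Y,B)=0$ for all $k>n$. This gives $H^{k}_G(X,A\cup F)=0$ for $k>n$, and with the triple sequence the first assertion follows.

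For the monomorphism I keep the same triple sequence, where $\ker j^{*}$ in degree $k$ equals $\operatorname{im} p$. The displayed vanishing shows $H^{*}_G(X,A\cup F)\cong H^{*}(Y,B)$ is a finite-dimensional $\mathbb{Z}_2$-vector space, hence a torsion module over $H^{*}(B_G)=\mathbb{Z}_2[t]$, every element being annihilated by a power of $t$. On the other hand, if $(X,A)$ is TNHZ in $(X_G,A_G)$, then the relative form of Proposition \ref{2.2} makes $H^{*}_G(X,A)$ a free, in particular torsion-free, $\mathbb{Z}_2[t]$-module. A $\mathbb{Z}_2[t]$-homomorphism from a torsion module into a torsion-free one is zero, so $p=0$ and $j^{*}$ is a monomorphism in all degrees.

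I expect the main obstacle to be the sharp degree bound in the vanishing lemma: finite-dimensionality by itself only kills $H^{k}_G(X,A\cup F)$ above $\dim X$, and it is the Gysin-sequence propagation (the fact that $\cup t$ becomes an isomorphism in high degrees yet must terminate in zero) that pins the threshold to exactly $n$. Two points require care along the way, namely the excision of the closed invariant set $F$ from the Borel construction, and the identification of the fibre cohomology $H^{*}(X,A\cup F)$ above degree $n$, the latter resting on the Smith-theoretic fact that the fixed set has no mod $2$ cohomology beyond degree $n$.
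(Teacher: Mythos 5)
This proposition is quoted by the paper from Bredon's book without proof, so the comparison is with the standard argument of Bredon, whose skeleton your proposal correctly reproduces: the exact sequence of the triple $(X_G,(A\cup F)_G,A_G)$, the excision identification $H^k((A\cup F)_G,A_G)\cong H^k(F_G,F_G\cap A_G)$, a vanishing theorem for $H^*_G(X,A\cup F)$ in degrees above $n$, and, for the second assertion, the fact that $H^*_G(X,A\cup F)$ is $t$-power torsion while TNHZ makes $H^*_G(X,A)$ a free $\mathbb{Z}_2[t]$-module, so the connecting map into $H^*_G(X,A)$ vanishes. The monomorphism part of your argument is correct as written (even the weaker vanishing you actually establish suffices for it).

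The gap is in the vanishing lemma, exactly at the threshold degree, and it is not a technicality. Your Gysin-sequence propagation needs the ``upstairs'' term $H^i(X,A\cup F;\mathbb{Z}_2)$ to vanish for all $i>n$, and you justify this by the hypothesis $H^i(X,A)=0$, $i>n$, together with the Smith-theoretic fact $H^i(F,F\cap A)=0$, $i>n$. But the exact sequence of the triple $(X,A\cup F,A)$ reads
\begin{equation*}
H^{n}(X,A)\longrightarrow H^{n}(F,F\cap A)\longrightarrow H^{n+1}(X,A\cup F)\longrightarrow H^{n+1}(X,A)=0,
\end{equation*}
so those two facts only give vanishing for $i>n+1$; in degree $n+1$ one gets $H^{n+1}(X,A\cup F)\cong\operatorname{coker}\bigl(H^n(X,A)\to H^n(F,F\cap A)\bigr)$, and nothing you have invoked makes that restriction surjective. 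Consequently the $\cup\, t$ propagation yields $H^k_G(X,A\cup F)=0$ only for $k>n+1$, and the triple sequence then makes $j^*$ an isomorphism only for $k>n+1$ --- one degree short of the statement. The missing surjectivity is true, but it is a Smith-theory fact of the same depth as the proposition itself; this is precisely where Bredon's proof differs from yours. Instead of the Gysin sequence of the free double cover $X\setminus F\to (X\setminus F)/G$, Bredon uses the Smith sequences of the branched cover $X\to X/G$, in which the orbit-space term and the fixed-set term occur side by side: for $k>n$, exactness together with $H^k(X,A)=H^{k+1}(X,A)=0$ gives $H^{k+1}(X/G,(A\cup F)/G)\cong H^{k}(X/G,(A\cup F)/G)\oplus H^k(F,F\cap A)$, and downward induction from degrees above $\dim X$ kills both families simultaneously for every $k>n$. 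This yields the sharp bound and produces, as a byproduct, the statement about $H^{>n}(F,F\cap A)$ that you imported as external input. Your plan is salvageable, but the fixed-set term must be kept inside the exact sequence rather than quoted separately.
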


	\begin{proposition}\label{prop 2.4}(\cite{bredon})
		Let $G=\mathbb{Z}_2$ act on a finite CW-complex $X$ and $X$ is TNHZ in $X_G.$ Then, $a|F$ is nontrivial element of the fixed point set $F,$ for any class $a\in H^n(X;\mathbb{Z}_2)$ such that $a^2 \neq 0.$
	\end{proposition}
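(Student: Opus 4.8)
The plan is to realize the restriction $a|F$ as the fibre component of an equivariant lift of $a$, and then to force that component to be non-zero using the localization properties of the Borel fibration together with the hypothesis $a^2\neq 0$. Write $H^*(B_G;\mathbb{Z}_2)=\mathbb{Z}_2[t]$ with $\deg t=1$ and consider the Borel fibration $X\stackrel{i}\hookrightarrow X_G\stackrel{\pi}\to B_G$. Since $X$ is TNHZ, $G$ acts trivially on $H^*(X;\mathbb{Z}_2)$ and the associated spectral sequence degenerates, so $i^*$ is surjective and, by Proposition \ref{2.2}, $H^*(X_G)$ is a free $\mathbb{Z}_2[t]$-module on lifts of a basis of $H^*(X)$. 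Fix $\alpha\in H^n(X_G)$ with $i^*(\alpha)=a$. Because $G$ acts trivially on $F$ we have $F_G=F\times B_G$ and $H^*(F_G;\mathbb{Z}_2)=H^*(F)[t]$.

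The equivariant inclusion $F\hookrightarrow X$ yields the commutative square
\[
\begin{CD}
F @>r>> X\\
@Vi_FVV @ViVV\\
F_G @>j>> X_G
\end{CD}
\]
so that $i_F^*\circ j^*=r^*\circ i^*$. As $i_F^*\colon H^*(F)[t]\to H^*(F)$ is evaluation at $t=0$, writing $j^*(\alpha)=\sum_{l\ge 0}a_l\,t^l$ with $a_l\in H^{n-l}(F)$ gives $a_0=r^*(a)=a|F$. Thus the assertion is \emph{equivalent} to the statement that the lowest $t$-coefficient of $j^*(\alpha)$ is non-zero, i.e.\ that $j^*(\alpha)$ is not divisible by $t$ in $H^*(F)[t]$. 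For a first reduction, note $i^*(\alpha^2)=a^2\neq 0$, so $\alpha^2\neq 0$; applying Proposition \ref{thm 2} with $A=\emptyset$ (the monomorphism clause, valid since $X$ is TNHZ) gives $j^*(\alpha^2)=(j^*\alpha)^2\neq 0$. In particular $\alpha$ restricts non-trivially to $(F_\beta)_G$ for some component $F_\beta$, so some $a_l\neq 0$; the task is to promote this to $a_0\neq 0$.

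For the crux I would invoke the remaining clause of Proposition \ref{thm 2}, namely that $j^*$ is an isomorphism in all degrees $>\dim X$; combined with the freeness from Proposition \ref{2.2} this says exactly that $j^*$ becomes an isomorphism after inverting $t$ (Borel localization), and that $H^*(X_G)$ is $t$-torsion-free. Suppose, for contradiction, that $a|F=a_0=0$, so $j^*(\alpha)=t\,\beta$ with $\beta\in H^{n-1}(F_G)$. Multiplying by a high power of $t$ moves the identity into the range where $j^*$ is surjective, and one would like to cancel the factor $t$ using torsion-freeness to conclude $\alpha\in t\cdot H^*(X_G)$, contradicting $i^*(\alpha)=a\neq 0$. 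The main obstacle is that this cancellation stalls precisely in the middle degrees $\le\dim X$, where $j^*$ need not be onto; this is exactly the point at which the hypothesis $a^2\neq 0$ must enter, through the multiplicative (Steenrod square) structure of $j^*(\alpha)$, to control the coefficients $a_l$ and forbid divisibility of $j^*(\alpha)$ by $t$. I expect this leading-coefficient analysis — showing that a class with $a^2\neq 0$ cannot restrict to something divisible by $t$ on the fixed set — to be the heart of the proof; once $a_0\neq 0$ is secured, $a|F=a_0\neq 0$ follows at once.
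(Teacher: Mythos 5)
Your write-up is a correct \emph{setup} but not a proof, and you say so yourself: the sentence ``I expect this leading-coefficient analysis \dots to be the heart of the proof'' concedes that the only step in which the hypothesis $a^2\neq 0$ would ever be used is missing. (For what it is worth, the paper offers no proof to compare against: Proposition \ref{prop 2.4} is quoted from Bredon's book, so the attempt has to stand on its own.) Everything up to the reduction is fine: $F_G=F\times B_G$, $H^*(F_G)\cong H^*(F)[t]$, the identification of $a|F$ with the constant term $a_0$ of $j^*(\alpha)$, injectivity of $j^*$, freeness of $H^*(X_G)$ over $\mathbb{Z}_2[t]$, and the isomorphism in degrees above $\dim X$ are all correct and are indeed the standard Borel--localization framework. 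But the contradiction you sketch from $j^*(\alpha)=t\beta$ genuinely does not close. Run it: for $N$ large, $t^N\beta=j^*(\gamma)$ for a unique $\gamma$, whence $j^*(t\gamma)=t^{N+1}\beta=j^*(t^N\alpha)$, so $t\gamma=t^N\alpha$, and torsion-freeness gives only $\gamma=t^{N-1}\alpha$ --- a tautology, not $\alpha\in t\,H^*(X_G)$. To conclude $\alpha\in t\,H^*(X_G)$ you would need $\beta$ itself, not $t^N\beta$, to lie in $\operatorname{im}j^*$, and that is exactly what fails in degrees $\leq\dim X$.

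The gap is essential, not cosmetic, because the structural facts you invoke (injectivity, freeness, high-degree surjectivity, multiplicativity of $j^*$) can never by themselves forbid divisibility of $j^*(\alpha)$ by $t$: they are all satisfied in situations where $a|F=0$ actually happens. Concretely, take the rotation involution on $X=\mathbb{S}^2$ with $F=\mathbb{S}^0$; this action is TNHZ, and the lift $\alpha$ of the generator $a\in H^2(X)$ has $j^*(\alpha)=(0,t^2)$ on the two components --- divisible by $t^2$, with $\alpha\notin t\,H^*(X_G)$, and with every one of your displayed properties holding. Of course here $a^2=0$, so the proposition is not contradicted; but the example shows that any correct proof must extract a contradiction specifically from $a^2\neq 0$, via some mechanism beyond the formal module-theoretic manipulations (in Bredon's treatment this comes from a finer analysis of the ring $H^*(F)[t]$ and the coefficients $a_l$, which is precisely the part you left as a hope). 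Also note that your ``first reduction'' ($j^*(\alpha^2)\neq 0$, hence some $a_l\neq 0$) is no reduction at all: injectivity of $j^*$ applied to $\alpha$ already gives some $a_l\neq 0$, and in fact every formal consequence you derive from $a^2\neq 0$ (such as $a^2|F=(a|F)^2$) is circular, since restriction is a ring homomorphism. So the proposal should be graded as: correct framework, key idea absent.
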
	
	
	\noindent We know that 
	$H^*(\mathbb{H}P^n \times \mathbb{S}^m; \mathbb{Z}_2)=\mathbb{Z}_2[a,b]/<{a^{n+1},b^2}>,$ where deg $a=4$ and  deg $b=m.$ 
	\noindent Throughout the paper,  
	$H^*(X)$ will denote the \v{C}ech cohomology of a space $X,$ and   $X\sim_2 Y,$  means $H^*(X;\mathbb{Z}_2 )\cong H^*(Y;\mathbb{Z}_2).$
	\section{Main Theorems}                               
	\noindent	Let $G=\mathbb{Z}_2$ act on  a finite CW-complex $X \sim_{2} \mathbb{H}P^n \times\mathbb{S}^m,$ where $ n, m \geq 1.$ In this section, we determined the possibilities of the connected fixed point sets  of involutions on $X,$ and  orbit spaces of free involutions on $X.$\\ 
	First, we have determined the  fixed point sets of involutions on $X.$ 	
	\begin{theorem}\label{thm 3.3}
		Let $G=\mathbb{Z}_2$ act on a finite CW-complex $X \sim_{2} \mathbb{H}P^n \times\mathbb{S}^m,$ $ n,m\geq 1.$ If $X$ is TNHZ in $X_{G}$ and the fixed point set $F$ is nonempty and connected,   then $F$ must be one of the following:  
		\begin{enumerate}
			\item\label{1} $F\sim_2 \mathbb{S}^3\times \mathbb{S}^q$ or  $F \sim_{2} \mathbb{F}P^n \times\mathbb{S}^q,$ where $\mathbb{F}=\mathbb{R},$ $\mathbb{C}$ or  $\mathbb{H},$ $1 \leq q \leq m.$
			
			\item \label{2}  $F\sim_2 \mathbb{F}P^{n+1}\# \mathbb{F}P^{n+1},$ where  $\mathbb{F}=\mathbb{R},\mathbb{C}$ or $\mathbb{H}.$
			
			\item \label{4} $ H^*(F) $ is generated by $c$ and $d,$ with  $c^{n+1}=d^2+c^{s}=d^{2l+2}=0,$ where deg $c=2,$ deg $d=q$  and $l=[\frac{n}{s}],$ $s=\frac{q}{2}$ if $q$ is even and $s=q$ if $q$ is odd. Moreover, for $q=1,$ $F \sim_{2} \mathbb{R}P^{2n+1}$ and for $q=2,$ $F \sim_{2} \mathbb{C}P^{2n+1}.$

			\item \label{5}$ H^*(F) $ is generated by $c$ and $d,$ with  $c^{\frac{r}{s}+1}=d^{\frac{r}{q}+1}=c^{\frac{r}{s}}+d^{\frac{r}{q}}=cd=0,$ where deg $c=s,s=1,2 ,$ deg $d=q,q=1,2,4$ or $8,$ $r=sq(2n+2)/(q+s)$ and $n=\frac{(q+s)k}{2}-1$ for  some $k\in \mathbb{N}.$ 
			
			\item \label{6}$ H^*(F) $ is generated by $c$ and $d,$ with   $c^{\frac{r}{s}+1}=c^{\frac{qj}{s}}+d^{j}=c^{\frac{r-q}{s}+1}d=0,$ where deg $c=s,s=1,2,$ deg $d=q< n,$   $r=\frac{s(2n+2)}{j}+qj-(q+1),n+1= jk$ for some $k\in \mathbb{N},$ or $j=2k,k=1$ or $2,$  and $n>\frac{(q+1)j}{2s}-1.$

		\end{enumerate}
	\end{theorem}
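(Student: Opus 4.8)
The plan is to run the Leray--Serre spectral sequence of the Borel fibration $X \hookrightarrow X_G \xrightarrow{\pi} B_G$ with $\mathbb{Z}_2$-coefficients, where $H^*(B_G;\mathbb{Z}_2)=\mathbb{Z}_2[t]$ with $\deg t=1$. Since $X$ is TNHZ in $X_G$, Proposition~2.1 gives that $G$ acts trivially on $H^*(X;\mathbb{Z}_2)=\mathbb{Z}_2[a,b]/\langle a^{n+1},b^2\rangle$ and that the spectral sequence degenerates, while Proposition~\ref{2.2} shows that $H^*(X_G;\mathbb{Z}_2)$ is the free $\mathbb{Z}_2[t]$-module on lifts $\{1,\tilde a,\dots,\tilde a^n,\tilde b,\tilde a\tilde b,\dots,\tilde a^n\tilde b\}$ of the standard basis of $H^*(X)$, where $i^*(\tilde a)=a$ and $i^*(\tilde b)=b$. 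In particular the total $\mathbb{Z}_2$-rank is $2(n+1)$, so by Proposition~2.1(b) we must have $\sum_i \mathrm{rk}\,H^i(F;\mathbb{Z}_2)=2(n+1)$; this rank constraint is the book-keeping device used throughout to pin down exponents.

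First I would determine the ring structure of $H^*(X_G;\mathbb{Z}_2)$. Because $i^*(\tilde a^{n+1})=a^{n+1}=0$ and $i^*(\tilde b^2)=b^2=0$, both $\tilde a^{n+1}$ and $\tilde b^2$ lie in the ideal $\ker i^*=(t)$, so each is a $\mathbb{Z}_2[t]$-combination of the module generators with strictly positive $t$-power. Writing these two relations in homogeneous form and imposing associativity together with the degree/parity constraints coming from $\deg\tilde a=4$ and $\deg\tilde b=m$ leaves only finitely many admissible shapes for the pair $(\tilde a^{n+1},\tilde b^2)$; each such shape yields exactly one of the five listed cases. This is the heart of the argument and the step I expect to be the main obstacle: one must verify that no other combination is algebraically consistent, and in the borderline situations ($m$ a multiple of $4$, $m$ close to $4n$, or the low cases $n=1$ and $q\in\{1,2\}$) several subcases must be separated and checked against the rank equation above.

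Next, for each admissible form I would transfer the information to the fixed point set. Since $G$ acts trivially on $F$ we have $F_G=F\times B_G$, hence $H^*(F_G;\mathbb{Z}_2)\cong H^*(F)\otimes\mathbb{Z}_2[t]$, a free $\mathbb{Z}_2[t]$-module of rank $2(n+1)$. By Proposition~\ref{thm 2} the restriction $j^*\colon H^*(X_G)\to H^*(F_G)$ is an isomorphism in all sufficiently high degrees and a monomorphism throughout; equivalently, after inverting $t$ it becomes an isomorphism of $\mathbb{Z}_2[t,t^{-1}]$-algebras. Thus the generators $c,d$ of $H^*(F)$ arise as the leading coefficients of $j^*(\tilde a)$ and $j^*(\tilde b)$, and the defining relations of $H^*(F)$ are obtained by pushing the two relations of $H^*(X_G)$ through $j^*$ and specializing $t$. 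Proposition~\ref{prop 2.4} guarantees, when $n\geq 2$ so that $a^2\neq 0$, that the $\tilde a$-class restricts nontrivially, which underlies the appearance of a generator $c$ of degree $1$, $2$, or $4$ (with $a|_F$ a power $c^{4/s}$) according to the case.

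Finally I would read off each of the listed cohomology rings and fix the remaining parameters using the rank equation $\sum_i\mathrm{rk}\,H^i(F)=2(n+1)$ together with Poincar\'e duality for $F$, which is available by Bredon's theorem since $X$ satisfies mod $2$ duality. In the product and connected-sum cases (1)--(2) this is a direct comparison of Poincar\'e series; in cases (3)--(5) it determines the exponents $l=[n/s]$, $r=sq(2n+2)/(q+s)$, and the like, and forces the divisibility conditions $n=\tfrac{(q+s)k}{2}-1$ and $n+1=jk$ recorded in the statement. The special identifications---$F\sim_2\mathbb{R}P^{2n+1}$ when $q=1$ and $F\sim_2\mathbb{C}P^{2n+1}$ when $q=2$ in case (3), and the dichotomy $\mathbb{S}^3\times\mathbb{S}^q$ versus $\mathbb{F}P^n\times\mathbb{S}^q$ in case (1)---then follow by recognizing the resulting truncated polynomial algebra as the $\mathbb{Z}_2$-cohomology ring of the named space.
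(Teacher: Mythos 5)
Your toolkit is the right one (TNHZ, the free $\mathbb{Z}_2[t]$-module structure from Proposition 2.2, the rank equality $\mathrm{rk}\, H^*(F)=2n+2$, injectivity of $j^*$ plus the high-degree isomorphism from Proposition 2.3, and Poincar\'e duality for $F$), but the step you yourself flag as ``the heart of the argument'' is based on a mechanism that cannot work. You propose to first classify the admissible pairs of relations $(\tilde a^{n+1},\tilde b^2)$ in $H^*(X_G)$ and then recover $H^*(F)$ by pushing those relations through $j^*$ and ``specializing $t$.'' The trouble is that inverting (or specializing) $t$ in $H^*(X_G)[t^{-1}]\cong H^*(F)\otimes\mathbb{Z}_2[t,t^{-1}]$ only recovers $H^*(F)$ as an \emph{ungraded} ring, and the abstract graded ring $H^*(X_G)$ does not determine the grading of $H^*(F)$ --- which is precisely the content of the theorem (it is what separates $\mathbb{R}P^n\times\mathbb{S}^q$ from $\mathbb{C}P^n\times\mathbb{S}^q$ from $\mathbb{H}P^n\times\mathbb{S}^q$, fixes $s\in\{1,2\}$ and $q\in\{1,2,4,8\}$ in cases (4)--(5), and produces the divisibility conditions on $n$). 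Concretely: for the trivial involution on $\mathbb{H}P^n\times\mathbb{S}^m$ and for the quaternionic-conjugation involution times the trivial action on $\mathbb{S}^m$, one gets in both cases $H^*(X_G)\cong\mathbb{Z}_2[t][\alpha,\beta]/\langle\alpha^{n+1},\beta^2\rangle$ as graded $\mathbb{Z}_2[t]$-algebras (in the second case $j^*(\alpha)=1\otimes c_1^4+\cdots+t^3\otimes c_1$, and every monomial in $j^*(\alpha)^{n+1}$ carries a power $c_1^k$ with $k\geq n+1$, so $\alpha^{n+1}=0$ still holds), yet the fixed sets are $\mathbb{H}P^n\times\mathbb{S}^m$ and $\mathbb{R}P^n\times\mathbb{S}^m$ respectively. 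So one ``shape'' of relations is compatible with several entries of the theorem, refuting your claim that each shape yields exactly one case; and the classification of shapes itself is only asserted, not carried out.

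What is missing is the device the paper actually uses, which works on the other side of $j^*$: since $F_G=F\times B_G$, expand the restrictions with \emph{unknown homogeneous components} in $H^*(F,x)\otimes\mathbb{Z}_2[t]$, namely $j^*(\alpha)=B_4\,1\otimes c_4+B_3\,t\otimes c_3+B_2\,t^2\otimes c_2+B_1\,t^3\otimes c_1$ and $j^*(\beta)=1\otimes d_m+t\otimes d_{m-1}+\cdots$, where $c_i,d_i\in H^i(F,x)$ and $c_4=a|F$. The classification then proceeds by cases on which of $B_1,B_2,B_3$ vanish (eight cases, plus the degenerate case $a|F=0$, which by Proposition 2.4 forces $n=1$), and within each case the relations of $H^*(F)$ are extracted by comparing coefficients of $t^k$ in $j^*(\alpha^i\beta^\varepsilon)$, using surjectivity of $j^*$ in degrees above $\dim X$, the rank bound, and duality. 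Note also that your remark that the generators of $H^*(F)$ arise as the ``leading coefficients'' of $j^*(\tilde a)$ and $j^*(\tilde b)$ is incorrect: when $F\sim_2\mathbb{R}P^n\times\mathbb{S}^q$ the degree-one generator occurs as the coefficient of $t^3$ in $j^*(\alpha)$, while the leading ($t^0$) coefficient is its fourth power $c_1^4$. Without this componentwise analysis of the embedding $j^*$, your outline has no way to decide in which degrees the generators of $H^*(F)$ live, so the case list of the theorem cannot be reached.
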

	\begin{proof}
		Let $x \in  F$ and   $\{a,\cdots, a^n,b, ab,\cdots a^nb\}$  be a  generating set of $H^*(X,x),$ where deg $a=4$ and deg $b=m.$  Since $X$ is TNHZ in $X_G,$ we get  rk $H^*(F)=2n+2,$ $\pi_1(B_G)$ acts trivially on $H^*(X_G,x_G)$ and the $E_2$-term $E_2^{p,q}=H^p(B_G) \otimes H^q(X)$ of Leray-Serre spectral sequence of the Borel fibration $ X \stackrel{i} \hookrightarrow X_G \stackrel{\pi} \rightarrow B_G$  is $E_{\infty}^{p,q}.$ So, the elements $\{1 \otimes a,1 \otimes a^2,\cdots, 1 \otimes a^n,1 \otimes b ,1 \otimes ab,\cdots 1 \otimes a^nb\}$ are permanent cocycles. Assume that $\alpha \in H^4(X_G,x_G)$ represents  generator $a \in H^4(X,x)$ and $\beta \in H^m(X_G,x_G)$ represents  generator $b \in H^m(X,x)$ such that $\eta_x^*(\alpha)=\eta_x^*(\beta)=0,$ where $\eta_x: \frac{\{x\} \times E_G}{G} \hookrightarrow \frac{X \times E_G}{G}$ is the inclusion map.  By Proposition \ref{2.2}, $\{\alpha, \alpha^2, \alpha^3, \cdots \alpha^n, \beta ,\alpha \beta , \alpha ^2 \beta, \cdots \alpha^n \beta\}$ is a generating set of $H^*(X_G,x_G)$ over $H^*(B_G)$-module. As
	   	$ H^m(F_G,x_G)= \bigoplus_{i=0}^{m} H^{m-i}(B_G) \otimes H^i(F,x)$ and $\eta_x(\beta)=0.$  We may assume that
		\begin{equation*}
			j^*(\beta)=1 \otimes d_m +  t \otimes d_{m-1} + \cdots + t^k \otimes d_{m-k} \cdots + t^{m-2}\otimes d_{2}+ t^{m-1}\otimes d_1,
		\end{equation*} where $d_i \in H^i(F,x),$ and  $j^*(\alpha)= B_1 t^3 \otimes c_1 + B_2 t^2 \otimes c_2+B_3 t \otimes c_3 + B_4 1 \otimes c_4,$ where $c_i\in H^i(F,x) $ and $B_i \in  \mathbb{Z}_2,1\leq i \leq 4.$ We know that $i_1^*j^*=j_1^*i^*$ where $i_1: F \hookrightarrow F_G$ and $ j_1: F \hookrightarrow X$  are the inclusion maps. So,  we get  $c_4=a|F.$ 
		If $c_4\neq 0,$   then $B_4=1.$ Clearly, $c_4^{n+1}=0.$  Thus, $j^*(\alpha)=1 \otimes c_4 + \sum_{i=1}^{3}B_it^{4-i}\otimes c_i,$ where $B_i \in \mathbb{Z}_2, 1 \leq i \leq 3.$ So, we  consider  eight cases according as $B_1,B_2$ and $B_3$ are zero or nonzero.  \\
		\textbf{Case (1):} If $B_1=B_2=B_3=0,$ then $j^*(\alpha)=1 \otimes c_4.$\\
		In this case, $c_4^i \neq 0 $ for $1\leq i \leq n.$ As $j^*$ is injective, $d_j \neq c_4^j,$ for some $j,$ $1 \leq j \leq n,$ where deg $d_j=j.$ Suppose $d_q=d_{m-k}=d$ is the least degree element such that $d_q \neq c_4^j.$ 
		As $j^*$ is onto on high degrees, for sufficiently large value of $r,$ we can write
		\begin{align*}
			t^{k+r} \otimes d=j^*(A_1 t^{r+m-4}\alpha  +\cdots +A_n t^{r+m-4n}\alpha^n+ A_m t^{r}\beta + \cdots+A_{m+n} t^{r-4n}\alpha^{n} \beta),
		\end{align*}
		where $A_i's$ are in $\mathbb{Z}_2.$ After comparing the coefficient of $t^{k+r}\otimes d,$ we get $A_m=1.$ So, we have\\ 	
	$t^{r} \otimes d_m+\cdots +t^{r+k-1} \otimes d_{m-(k-1)}+t^{r+k+1} \otimes d_{m-(k+1)}+ \cdots + t^{r+m-1} \otimes d_1	= -j^*(A_1 t^{r+m-4}\alpha + \cdots + A_n t^{r+m-4n}\alpha^n + A_{m+1} t^{r-4}\alpha \beta  + \cdots+A_{m+n} t^{r-4n}\alpha^{n} \beta).$\\
	From the above equation, we get that if $q\equiv 1,2,3~ (\mbox{mod}~ 4),$ then  $d_{4i}=c_4^i, d_{4i+q}=c_4^id ,1 \leq i \leq n,$ and if $q\equiv 0  ~(\mbox{mod}~ 4),$ then $ d_{4k}= c_4^{k}+c_4^{k-1}d,$ where $1 \leq i \leq n,$ and zero otherwise. Thus, we get $j^*(\alpha^n\beta )=t^k\otimes c_4^nd.$ As $\alpha^n\beta \neq 0, $ we get  $c_4^nd \neq 0,$ and hence $c_4^id \neq 0$ for $ 1 \leq i \leq n.$ Clearly, if $d^2=0,$ then $F\sim_2 \mathbb{H}P^n \times \mathbb{S}^q, 1 \leq q \leq m.$ If $d^2 \neq 0,$ then either   $q\equiv 0$(mod $4)$ or $q\equiv 2 $(mod $4$). First, suppose that $q\equiv 2 $(mod $4$). So, we have  $d^2=c_4^{\frac{q}{2}}.$ Consequently, $d^{2l+1}=c_4^{\frac{lq}{2}}d,$ where $l=[\frac{2n}{q}]$ and $d^{2l+2}=0.$ Thus, we get $c_4^{n+1}=d^{2l+2}=d^2+c_4^{\frac{q}{2}}=0.$ In particular, for  $q=2,$ we get $F \sim_2 \mathbb{C}P^{2n+1}.$ This realizes possibility (\ref{4}). Next, suppose that $q\equiv 0 $(mod $4$) then we have either  $d^2=c_4^{\frac{q}{2}}$ or $d^2=c_4^{\frac{q}{4}}d.$ If $d^2=c_4^{\frac{q}{2}},$ then by suitable change of basis, we get $F\sim_{2} \mathbb{H}P^n \times \mathbb{S}^q, 1 \leq q \leq m.$ If $d^2=c_4^{\frac{q}{4}}d,$ then $q$ must be $4.$ By the change of basis $d'=d+c_4,$ we get $d'^{n+2}=d^{n+2}=d'^{n+1}+d^{n+1}=dd'=0.$ Thus, $F\sim_2 \mathbb{H}P^{n+1}\# \mathbb{H}P^{n+1}.$ This realizes possibility (2) for $\mathbb{F}=\mathbb{H}.$\\
		\textbf{Case (2): }  If $B_1=B_2=0$ and $B_3=1,$ then $j^*(\alpha)= 1 \otimes c_4 + t \otimes c_3.$\\		
		Assume that  $H^1(F)\neq 0.$ Further, we consider cases according as $c_3=c_1^3$ or   $c_3\neq c_1^3.$\\
		First, consider $c_3=c_1^3.$ Suppose $H^*(F)$ has one generator. Then, $c_4=c_1^4$ and $j^*(\alpha^n)=\sum_{r=0}^{n} t^r \otimes c_1^{4n-r}.$ By the injectivity of homomorphism  $j^*,$ we get $c_1^{3n}\neq 0.$ Clearly, rk $H^*(F) >2n+2,$ a contradiction. Suppose $H^*(F)$ has two generators. Then, either $c_4=c_1^4$ or $c_4\neq c_1^4.$ \\
		Let  $c_4=c_1^4.$ Then, we also have  rk $H^*(F)>2n+2,$ a contradiction.\\
		Let $c_4\neq c_1^4.$ Further, if $c_1^4=0,$ then 
		\[
		j^*(\alpha^n)=
		\begin{cases}
			1 \otimes c_4^n & \mbox{if $n$ is even} \\
			1\otimes c_4^n + t \otimes c_4^{n-1}c_1^3 & \text{if $n$ is odd}. \\
		\end{cases}
		\]
		As $j^*(\alpha^n) \neq 0,$ $c_4^n\neq 0,$ for $n$ even. If  $n$ is  odd and $c_4^n= 0,$ then rk$H^*(F)=4n> 2n+2, n>1$ a contradiction. Clearly, this case is not possible for $n=1.$ Thus, we have  $c_4^n \neq 0.$ As $F$ is poincar\'{e} duality space, again we get  rk $H^*(F)> 2n+2,$ a contradiction.\\
		Now, if $c_4\neq c_1^4~\&~c_1^4\neq 0,$ then we must have $$j^*(\alpha^n \beta )= \sum_{r=0}^{n}\sum_{i=0}^{m-1}{{n}\choose{r}} t^{r+i} \otimes (\oplus_{j+4l=m-i} c_4^{n-r+l}c_1^{j+3r}).$$ If the  cup product  $c_1c_4=0,$ then $c_1^{3n+1}\neq 0.$ This gives rk $H^*(F)>2n+2,$ a contradiction. If the cup product $c_1c_4\neq 0,$ then the rank of $H^*(F)$ further increases, which is not possible.\\
		Next, consider $c_3 \neq c_1^3.$ As $H^*(F)$ has at most two generators, we get $c_4=c_1^4.$ Thus $$j^*(\alpha^n \beta )= \sum_{r=0}^{n}\sum_{i=0}^{m-1}{{n}\choose{r}} t^{r+i} \otimes (\oplus_{j+3l=m-i} c_1^{4n-4r+j}c_3^{r+l}).$$ We get $c_1^{4n+1},c_1^{4n-3}c_3,\cdots ,c_1c_3^n$ are least degree elements when $r=0,1, \cdots , n,$ respectively.  In any case, it is easy to observed that  rk $H^*(F)>2n+2,$ a contradiction.
		
		Now, Suppose that $H^1(F) =0.$ Further, assume that $H^2(F)\neq 0.$ Then, we must have  $c_4 =c_2^2.$ Then, $$j^*(\alpha^n \beta )= \sum_{r=0}^{n}\sum_{i=0}^{m-1}{{n}\choose{r}} t^{r+i} \otimes (\oplus_{2l+3j=m-i} c_2^{2n-2r+l}c_3^{r+j}).$$ Note that $c_2^{2n+1},c_2^{2n-1}c_3.c_2^{2n-3}c_2^2. \cdots c_2^2c_3^{n-1},c_2c_3^n$ are the least degree elements when $r=0,1,2 \cdots n$ respectively. So, we always have rk $H^*(F)>2n+2,$ a contradiction.  \\
		Next, assume that $H^2(F)=0.$ Then, 	 $$j^*(\alpha^n \beta )= \sum_{r=0}^{n}\sum_{i=0}^{m-1}{{n}\choose{r}} t^{r+i} \otimes (\oplus_{3l+4j=m-i} c_4^{n-r+j}c_3^{r+l}).$$ The  least degree elements in the above expression $c_4^nc_3,c_4^{n-1}c_3^2, \cdots c_4c_3^n$ and $c_3^{n+1}.$ Let $c_3c_4\neq 0.$ If $c_3^2=0,$ then $c_4^nc_3\neq 0.$ Thus, $F\sim_2 \mathbb{H}P^n \times \mathbb{S}^3.$ If $c_4^2=0$ and $c_3^{n+1}=0 ,$ then $F\sim_2 X \times \mathbb{S}^4,$ where $X$ has truncated polynomial ring $\frac{\mathbb{Z}_2[x]}{<x^{n+1}>},$  deg $x=3.$ By the Theorem $4.5$ in \cite{steenrold}, this is not possible. If $c_4^2=0$ and $c_3^{n+1}\neq 0,$ then  rk $H^*(F)>2n+2,$ a contradiction.\\ Now, let $c_3c_4=0.$ Then, $c_3^{\frac{r}{3}}=c_4^{\frac{r}{4}}$ is generator of $H^r(F)$ and $c_3^{\frac{i}{3}} \neq c_4^{\frac{i}{4}}$ for $i<r.$ As, rk $H^*(F)=2n+2,$ we get $r=\frac{24n+24}{7}.$ So, $n = \frac{7k-2}{2},k$ even. Thus, $c_3^{4k+1}=c_4^{3k+1}=c_3c_4=c_3^{4k}+c_4^{3k}=0.$ Thus, $F\sim_2 Y \# Z,$ where $Y$ and $Z$ both are truncated polynomials with generator $c_3$ and $c_4,$ respectively. But by the Theorem $4.5$ in \cite{steenrold}, this is not possible.\\
		\textbf{Case (3):}  If $B_1=B_3=0$ and $B_2=1,$ then   $j^*(\alpha)=1 \otimes c_4 + t^2 \otimes c_2.$\\
		If $H^*(F)$ has one generator then $F\sim_{2} \mathbb{R}P^{2n+1}$ or $F \sim_2 \mathbb{C}P^{2n+1}$ according as $H^1(F)\neq 0$ or $H^1(F)=0.$ This realizes possibility (\ref{4}) for $q=1$ and $q=2,$ respectively. Now, assume that $H^*(F)$ has two generators.  We consider two subcases according as $c_4\neq c_2^2$ or  $c_4= c_2^2.$\\
		\textbf{Subcase(i):} Assume that $c_4 \neq c_2^2.$\\ First, assume that $H^1(F)=0.$ If $c_2^2=0$ then  	\[
		j^*(\alpha^n)=
		\begin{cases}
			1 \otimes c_4^n & \mbox{if $n$ is even} \\
			1 \otimes c_4^{n} +t^{2}\otimes c_4^{n-1}c_2 & \text{if $n$ is odd}. \\
		\end{cases}
		\] By the  injectivity of $j^*,$ we get $c_4^n\neq 0.$ If $c_4^{n+1}=0$ then $F\sim_2 \mathbb{H}P^n \times \mathbb{S}^2.$ If $c_4^{n+1}\neq 0$ then rk $H^*(F)>2n+2,$ a contradiction.\\
		If $c_2^2 \neq 0,$ then we get $j^*(\alpha^n\beta)= \sum_{r=0}^{n}\sum_{i=0}^{m-1}{{n}\choose{r}}t^{2r+i} \otimes (\oplus_{2l+4j=m-i}c_4^{n-r+j}c_2^{r+l}).$ We get that the least degree elements are $c_4^nc_2,c_4^{n-1}c_2^2,c_4^{n-2c_2^3}\cdots c_4^2c_2^{n-1},c_4c_2^n$ and $c_2^{n+1}.$ Note that if $c_4^{n-k}c_2^{k+1}\neq 0, $ for any $0 \leq k \leq n-1,$ then rk $H^*(F)>2n+2,$ a contradiction. So, at least one of $c_2^{n+1}$ or $c_2^nc_4$ must be nonzero.\\
		Let $c_2c_4=0,$ then we must have   $c_2^{n+1}\neq 0.$ Thus, $c_2^{\frac{r}{2}}=c_4^{\frac{r}{4}}$ is the generator of $H^r(F).$ This implies that  rk $H^*(F)={\frac{r}{2}}+{\frac{r}{4}}=2n+2.$ Thus, $r=8k,$ where $n=3k-1,$  $k \in \mathbb{N}.$ Hence, $F\sim_2 \mathbb{C}P^{2k}\# \mathbb{H}P^{2k}, k \in \mathbb{N}.$ This realizes possibility (\ref{5}) for $s=2~\&~q=4.$\\
		If  $c_2c_4 \neq 0,$ then for $c_2^{n+1}=0~\&~$$c_4^2=0,$ clearly, $F\sim_2 \mathbb{C}P^n \times \mathbb{S}^4.$ For $c_4^2 \neq 0,$  we must have $c_4^2=c_2^3.$ By the change of basis $d'=c_2^2+c_4,$ we get the cohomology ring is given by $c_2^{n+1}=d'^2=0.$ This realizes possibility (1) for $\mathbb{F}=\mathbb{C} ~\&~ q=4.$\\
		If $c_2^{n+1}\neq 0$ and $c_2^nc_4 \neq 0,$ then rk $H^*(F)>2n+2,$ a contradiction.\\
		If $c_2^{n+1}\neq 0.$ Then, $c_2^{\frac{r}{2}}=c_2^{\frac{r-4j}{2}}c_4^j,$  $j>1$ forms  generator of $H^r(F).$ Which implies  that $c_2^{2j}=c_4^{j}$ is generator of $H^{4j}(F).$ We get rk $H^*(F)=j(\frac{r-4j}{2})+\frac{4j}{2}+j$ which must be  $2n+2$ so, $r= \frac{4n+4}{j}+4j-2.$ We must have either $n+1= jk,$ for some $k\in \mathbb{N}$ or $j=2k, k=1,2.$  Note that  $c_2^{\frac{r-4j}{2}+1}c_4=0,$ as if $c_2^{\frac{r-4j}{2}+1}c_4\neq 0.$ Then,  poinca\'{r}e dual of $c_2^{\frac{r-4j}{2}+1}c_4$ are namely, $c_2^q$ and $d^2,$ which is not possible. Hence, the cohomology ring is given by $c_2^{\frac{r}{2}}=c_2^{2j}+c_4^{j}= c_2^{\frac{r-4j}{2}+1}c_4=0.$ This realizes possibility (\ref{6}) for $s=2~\&~ q=4.$\\
		Now, suppose that $H^1(F) \neq 0,$ then we consider two possibility accordingly $c_2=c_1^2$ or $c_2 \neq c_1^2.$\\
		If $c_2=c_1^2,$ then we get $c_1^{2n}\neq 0.$ which leads contradiction.\\
		If $c_2 \neq c_1^2,$ then we must have $c_4=c_1^4.$ It is easy to observed that either $c_2^2$  is zero or non zero, we get rk $H^*(F)>2n+2,$ a contradiction.\\
		\textbf{Subcase(ii):} Assume that $c_4=c_2^2.$\\
		If $H^1(F)\neq 0,$ then for $c_2=c_1^2,$ we get $c_1^{2n+1}$ must be non zero. Thus, rk $H^*(F)>2n+2,$ a contradiction. Now, for $c_2\neq c_1^2,$ we have  $j^*(\alpha^n)= \sum_{r=0}^{n}{{n}\choose {r}}t^{2r}\otimes c_2^{2n-r},$ which implies that  $c_2^n$ must be nonzero.  If $c_1^2=0$ and  $c_2^{n+1}=0,$ then $F \sim_2 \mathbb{C}P^n \times \mathbb{S}^1.$ Which realizes possibility (1) for $\mathbb{F}=\mathbb{C}$ and $q=1.$ If $c_2^{n+1}\neq 0,$ then rank of $H^*(F)$ exceed $2n+2,$ a contradiction. Now, suppose that $c_1^2\neq 0$ then this case only possible when $n=2$ and cup product $c_1c_2=0,$ otherwise rk $H^*(F)>2n+2.$ For $n=2,$ we get $c_1^4=c_2^2$ is generator of  $H^4(F).$ Thus, $F\sim_2 \mathbb{R}P^4 \# \mathbb{C}P^2.$ This realizes possibility (\ref{5}) for $n=2$ and $q=2~\&~s=1.$\\
		If $H^*(F)=0,$ then  $j^*(\alpha^n )=\sum_{r=0}^{n}{{n}\choose {r}}t^{2r}\otimes c_2^{2n-r},$ which implies that $c_2^n\neq 0.$\\ For $c_2^{n+1}=0,$ we get $j^*(\alpha^n\beta )=c_2^nd\neq 0,$ where deg $d=q,$  $d\neq c_2^i, 1\leq i \leq n.$ If $d^2=0,$ then $F\sim_2 \mathbb{C}P^n \times \mathbb{S}^q, 1 \leq q \leq m.$ If $d^2 \neq 0, $ then either $d^2=c_2^q$ or $d^2=c_2^{\frac{q}{2}}d.$\\
		Suppose that $d^2=c_2^q,$ then if $q \equiv 0$ (mod $2),$ then by the change of basis $d'=d+c_2^{\frac{q}{2}},$ we get $d'^2=0$ and $c_2^id'\neq 0$
		for $1\leq i\leq n.$ Thus, $F \sim_2 \mathbb{C}P^n \times \mathbb{S}^q, 1 \leq q \leq m.$ This realizes possibility (1) for $\mathbb{F}=\mathbb{C}.$ If $q \not\equiv 0$ (mod $2),$ then $d^{2l+1}=c_2^{lq}d,$ where $l=[\frac{n}{q}]$ and $d^{2n+2}=0.$ This realizes possibility (\ref{4}) for $s=q~\&~q $ odd. Moreover, if $q=1,$ then clearly $F\sim_2 \mathbb{R}P^{2n+1}.$\\
		If $d^2=c_2^{q/2}d,$ then $q$ must be $2.$ As if $2<q(\mbox{even}) \leq n,$ then $F$ does not satisfy poinca\'{r}e duality.  By the change of basis $d'=d+c_2,$ we get $d'^{n+2}=d^{n+2}=d'^{n+1}+d^{n+1}=dd'=0 .$ Thus, $F\sim_2 \mathbb{C}P^{n+1}\# \mathbb{C}P^{n+1}.$   This realizes possibility (2) for $\mathbb{F}=\mathbb{C}.$ \\
		For $c_2^{n+1}\neq 0,$ we get	$j^*(\alpha^n \beta )=\sum_{r=0}^{n}\sum_{i=1}^{m-1}{{n}\choose{r}}t^{2r+i}\otimes (\oplus_{2l+qj=m-i} c_2^{2n-r+l}d^j )_{j=\{0,1\}}$  \begin{equation*}
			=\sum_{r=0}^{n}\sum_{i=1}^{m-1}{{n}\choose{r}}t^{2r+i}\otimes c_2^{2n-r+\frac{m-i}{2}}+\sum_{r=0}^{n}\sum_{i=q}^{m-1}A_{l,j}{{n}\choose{r}}t^{2r+i}\otimes (\oplus_{2l+q=m-i}c_2^{2n-r+l}d).
		\end{equation*}
		From above expression we get $c_2^{n+1} $ amd $c_2^nd$ are least degree elements. Clearly, if $c_2^nd \neq 0$ then rk $H^*(F)>2n+2,$ a contradiction.
		Now, suppose that $c_2^nd=0~\&~c_2^{n+1}\neq 0.$ \\
		If  $c_2d=0,$ then we must have $d^2\neq 0, $ otherwise, we cannot have poincar\'{e} dual of $d.$ It is easy to observe that   $c_2^{\frac{r}{2}}=d^{\frac{r}{q}}$ is the generator of $H^r(F),$ where $r$ is the formal dimension of $H^*(F).$ As rk $H^*(F)=2n+2,$ we get $r= \frac{4q(n+1)}{q+2}.$ So, $(q+2)|(4n+4),$ and hence, $n=(q+2)k-1$ for $q\equiv 0,1,$ or $3$ (mod $4$)  $\&$ $n=(\frac{q+2}{2})k-1$ for $q\equiv 2$ (mod $4).$
		Thus, $c_2^{\frac{r}{s}+1}=d^{\frac{r}{q}+1}=c_2^{\frac{r}{s}}+d^{\frac{r}{q}}=c_2d=0.$ This realizes possibility (\ref{5}) for $s=2.$	
		
		If  $c_2d\neq 0$ and $c_2^nd=0.$ Let $r$ be the formal dimension of $F.$ In this case,  
		we show that the generators of $H^r(F)$ would be $c_2^{\frac{r}{2}}$ which is equal to   $c_2^{\frac{r-qj}{2}}d^j,$ where $j>1.$  Assume that $c_2^{\frac{r}{2}}= 0.$  Now, if $c_2^{\frac{r-q}{2}}d$ is generator of $H^r(F),$ then $c_2^{\frac{i}{2}} \neq c_2^{\frac{i-q}{2}}d, q <i <r,$ otherwise, $c_2^{\frac{r}{2}}=c_2^{\frac{r-q}{2}}d$, a contradiction. Thus, rk $H^*(F)\geq 2n+4>2n+2,$ which contradicts our hypothesis. Similarly, $c_2^{\frac{r-qj}{2}}d^j,$ where $j>1,$ cannot be generator of $H^r(F).$ Thus, $c_2^{\frac{r}{2}}\neq 0.$  Further, if $c_2^{\frac{r}{2}}=c_2^{\frac{r-q}{2}}d$ is a generator of $H^r(F),$ then $c_2^{\frac{r-q}{2}}$  would have two poincar\'{e} duals namely, $c_2^{\frac{q}{2}}$ and $d,$ again a contradiction. Hence, $c_2^{\frac{r}{2}}=c_2^{\frac{r-qj}{2}}d^j$ is generator of $H^r(F),$ where $j>1.$
		
		As $c_2^nd=0,$ we must have  $q\leq n$ and $c_2^{\frac{r}{2}}=c_2^{\frac{r-qj}{2}}d^j,j>1,$ generates  $H^r(F),$ where $r$ is  even.  Thus,    $c_2^\frac{qi}{2}{}\neq d^i$ for $1\leq i \leq j-1$  and $c_2^{\frac{qj}{2}}= d^j,$ where $qj<r.$ We get rk $H^*(F)=j(\frac{r-qj}{2})+\frac{qj}{2}+j$ which must be  $2n+2$ so, $r= \frac{4n+4}{j}+qj-(q+2).$ We must have either $n+1= jk,$ for some $k\in \mathbb{N}$ or $j=2k,k=1,2.$   Hence, the cohomology ring $H^*(F)$ is generated by $c_2$ and $d,$ $c_2^{\frac{r}{2}+1}=c_2^{\frac{qj}{2}}+d^{j}=c_2^{\frac{r-qj}{2}+1}d=0.$ As $qj<r,$ we get   $\frac{(q+1)j}{4}-1<n.$ 
		This realizes possibility (5) for $s=2.$ \\
		\textbf{Case (4): }  If $B_2=B_3=0$ and $B_1=1,$ then  $j^*(\alpha)=1 \otimes c_4+t^3\otimes c_1.$\\
		In this case, if $H^*(F)$ has one generator then $F\sim_2 \mathbb{R}P^{2n+1}.$  Suppose that $H^*(F)$ has two generators. Now, we consider two subcases according as $c_4=c_1^4$ or $c_4\neq c_1^4.$\\
		\textbf{Subcase(i):} Assume that $c_4=c_1^4.$\\
		We have $j^*(\alpha^n)= \sum_{r=0}^{n}{{n}\choose{r}}t^{3r}\otimes c_1^{4n-3r}.$ Which implies that $c_1^n$ must be non zero. Let  $d\neq c^i,1\leq i \leq n$ be the generator of $H^*(F)$ having degree $q.$  We get \begin{equation*}
			j^*(\alpha^n \beta )=\sum_{r=0}^{n}\sum_{i=1}^{m-1}{{n}\choose{r}}t^{3r+i}\otimes c_1^{4n-3r+m-i}+\sum_{r=0}^{n}\sum_{i=q}^{m-1}A_{l,j}{{n}\choose{r}}t^{3r+i}\otimes c_1^{4n-3r+(m-i-q)}d.
		\end{equation*} 
		After expanding the above expression we get $c_1^{n+1}$ and $c_1^nd$ are the least possible  degree elements. If $c_1^{n+1}=0,$ then $c_1^nd$ must be non zero. Thus, for $d^2=0,$ we get $F \sim_2 \mathbb{R}P^n \times \mathbb{S}^q, 1 \leq q\leq m.$ This realizes possibility (1) for $\mathbb{F}=\mathbb{R}.$ And for $d^2\neq 0 ,$ we have two possibility either $d^2=c_1^{2q}$ or $d^2=c_1^qd.$\\
		If $d^2=c_1^{2q},$ then by the change of basis $d'=d+c_1^q,$ we get realizes possibility (1) for $\mathbb{F}=\mathbb{R}.$\\
		If $d^2=c_1^qd,$ then for $1<q\leq n,$  $F$ does not satisfy poincar\'{e} duality. So, we must have $q=1.$ Again, by the change of basis $d'=c+d,$  we get $d'^{n+2}=d^{n+2}=d'^{n+1}+ d'^{n+1} = dd'=0.$ Thus, $F\sim_2 \mathbb{R}P^{n+1}\#\mathbb{R}P^{n+1}.$ This realizes possibility (2) for $\mathbb{F}=\mathbb{R}$. \\
		Now, Assume that $c_1^{n+1}\neq 0$ then $c_1^nd$ either zero or non zero.
		Obviously, $c_1^nd\neq 0$ is not possible. Suppose that $c_1^nd =0.$ \\ If $c_1d=0,$ then  we get $c_1^{r+1}=d^{\frac{r}{q}+1}=c_1d=c_1^r+d^{\frac{r}{q}}=0,$ where $r=\frac{q(2n+2)}{q+1}.$ This realizes possibility (\ref{5}) for $s=1.$ \\If $c_1d\neq 0,$ then  we get $c_1^r=c_1^{r-qj}d^j,j>1$ which generates  $H^r(F).$  Thus,    $c_1^{qi}\neq d^i$ for $1\leq i \leq j-1$  and $c_1^{qj}= d^j$ where $qj<r.$ We get rk $H^*(F)=j(r-qj)+qj+j$ which must be  $2n+2$ so, $r= \frac{2n+2}{j}+qj-(q+1).$ We must have either $n+1= jk,$ for some $k\in \mathbb{N}$ or $j=2.$   Hence, the cohomology ring $H^*(F)$ is generated by $c_1, d$ with $c_1^{r+1}=c_1^{qj}+d^{j}=c_1^{r-qj+1}d=0,$ with $\frac{(q+1)j}{2}-1<n.$ This realizes possibility (\ref{6}) for $s=1.$\\
		\textbf{Subcase(ii):} Assume that $c_4 \neq c_1^4.$\\
		First, we consider when $c_1^4=0.$\\
		If $c_1^2=0,$ then \[ j^*(\alpha^n)=
		\begin{cases}
			1 \otimes c_4^n & \mbox{if $n$ is even} \\
			1 \otimes c_4^{n} +t^{3}\otimes c_4^{n-1}c_1 & \text{if $n$ is odd}. \\
		\end{cases}
		\] 
		Clearly, $c_4^n$ must be nonzero. Thus, $F\sim_2 \mathbb{H}P^n \times \mathbb{S}^1.$ This realizes possibility (1) for $\mathbb{F}=\mathbb{H}$ and $q=1.$\\
		If $c_1^2\neq 0$ and $c_1^3=0,$ then $j^*(\alpha^n) = \sum_{r=0}^{2}{{n}\choose {r}}(1\otimes c_4)^{n-r}(t^3\otimes c_1)^r$ and  $$j^*(\alpha^n\beta)= \sum_{i=0}^{m-1}\sum_{r=0}^{2}{{n}\choose {r}}t^{i+3r} \otimes (\oplus_{4l+j=m-i}c_4^{n-r+l}c_1^{r+j}).$$
		So, we get $c_4^nc_1$ and $c_4^{n-1}c_1^2$ are the least degree elements. Since $F$ is poinca\'{r}e duality space and rk $H^*(F)=2n+2,$ so $c_4^{n-1}c_1^2$ is generator of formal dimension. So, this possible only when $n=2$ and $c_4^2=0.$ Thus, $F\sim_2 \mathbb{R}P^2\times \mathbb{S}^4.$\\
		If $c_1^3\neq 0$ and $c_1^4 =0,$ then $j^*(\alpha^n\beta)= \sum_{i=0}^{m-1}\sum_{r=0}^{3}{{n}\choose {r}}t^{i+3r} \otimes (\oplus_{4l+j=m-i}c_4^{n-r+l}c_1^{r+j}).$ Then, $c_4^{n-1}c_1^2$ and $c_4^{n-2}c_1^3$ are the possible  least degree elements. Clearly, $c_4^{n-2}c_1^3$ is possible generator of formal dimension only when $n=3$ and $c_4^2=0.$ Thus we get $F\sim_2 \mathbb{R}P^3\times \mathbb{S}^4.$\\
		Now, suppose that $c_1^4\neq 0.$ \\
		We have  $j^*(\alpha^n\beta)= \sum_{i=0}^{m-1}\sum_{r=0}^{n}{{n}\choose {r}}t^{i+3r} \otimes (\oplus_{4l+j=m-i}c_4^{n-r+l}c_1^{r+j}).$ It is easy to observed that $c_1^{n+1}$ and $c_1^nc_4$ are the least possible degree elements.\\
		If $c_1^{n+1}=0,$ then $c_1^nc_4$ must be non zero. Clearly, when $c_4^2=0,$ $F\sim_2 \mathbb{R}P^n\times \mathbb{S}^4$  and when $c_4^2\neq 0,$ then we must have  $c_4^2=c_1^8.$ After change of basis $d'=c_1^4+c_4$ we get $F\sim_2 \mathbb{R}P^n\times \mathbb{S}^4.$ This realizes possibility (1) for $\mathbb{F}=\mathbb{R}$ and $q=4.$ \\
		Now suppose that $c_1^{n+1}\neq 0,$ then $c_1^nc_4$ either zero or non zero.\\
		Obviously, $c_1^nc_4\neq 0$ is not possible. And if $c_1^nc_4 =0,$ then for cup product zero, we get $c_1^{r+1}=c_4^{\frac{r}{4}+1}=c_1c_4=c_1^r+c_4^{\frac{r}{4}}=0$ where $r=\frac{4(2n+2)}{5}.$ This realizes possibility (\ref{5}) for $s=1~\&~q=5.$ For cup product non zero,  we get $c_1^r=c_1^{r-4j}c_4^j,j>1$ which generates  $H^r(F).$  Thus,    $c_1^{4i}\neq c_4^i$ for $1\leq i \leq j-1$  and $c_1^{4j}= c_4^j,$ where $4j<r.$ We get rk $H^*(F)=j(r-4j)+4j+j$ which must be  $2n+2$ so, $r= \frac{2n+2}{j}+4j-5,$ where  either $n+1= jk,$ for some $k\in \mathbb{N}$ or $j=2.$     Hence, the cohomology ring $H^*(F)$ is generated by $c_1$ and $ c_4$ with $c_1^{r+1}=c_1^{4j}+c_4^{j}=c_1^{r-4j+1}c_4=0,$ with $\frac{5j}{2}-1<n.$ This realizes possibility (\ref{6}) for $s=1$ and $q=4.$\\
		\textbf{Case(5):}  If $B_1=0$ and $B_2=B_3=1,$ then $j^*(\alpha)=1 \otimes c_4+t\otimes c_3+t^2\otimes c_2.$\\
		In this case, if $H^*(F)$ has one generator then $F\sim_2 \mathbb{R}P^{2n+1}.$ Suppose that $H^*(F)$ has two generators. Now, we consider two subcases: (i) $c_4=c_2^2$ (ii) $c_4\neq c_2^2.$\\
		\textbf{Subcase(i):} $c_4=c_2^2$\\
		If $H^1(F)=0,$ then  $j^*(\alpha^n) = \sum_{r=0}^{n}\sum_{k=0}^{n-r}{{n}\choose {r}}{{n-r}\choose {k}}t^{2k+r}\otimes c_2^{2n-2r-k}c_3^{r}$ and  $$j^*(\alpha^n\beta)= \sum_{i=0}^{m-1}\sum_{r=0}^{n}\sum_{k=0}^{n-r}{{n}\choose {r}}{{n-r}\choose {k}}t^{2k+r+i} \otimes (\oplus_{2l+3j=m-i}c_2^{2n-2r-k+l}c_3^{r+j}).$$ 
		If $c_2c_3=0,$ then we get $c_2^{n+1}$ and $c_3^{n+1}$ are the least degree element. We must have $c_2^{\frac{r}{2}}=c_3^{\frac{r}{3}}$ is generator of $H^r(F).$ Thus, rk $H^*(F)=\frac{r}{2}+\frac{r}{3}=2n+2\implies r=\frac{12n+12}{5}>2n+2$ but $r<3n+3.$ So, $c_3^{n+1}=0.$ Thus, we have $c_2^{\frac{r}{2}+1}=c_3^{\frac{r}{3}+1}=c_2c_3=c_2^{\frac{r}{2}}+c_3^{\frac{r}{3}}=0,$ where $r\equiv 12k, n\equiv 5k-1;k \in \mathbb{N}.$ This realizes possibility (\ref{5}) for $s=2~\&~q=3.$\\
		If $c_2c_3\neq 0,$ then we have $c_2^{n+1}$ and $c_2^{n}c_3$ are the possible least degree elements. If $c_2^{n+1}=0$ then we must have  $c_2^{n}c_3\neq 0 .$ Clearly, for $c_3^2=0,$ we get $F\sim_2 \mathbb{C}P^n\times \mathbb{S}^3$  and for  $c_3^2\neq 0,$ we must have  $c_3^2=c_2^3.$ Thus, $c_2^{n+1}=c_3^2+c_2^3=c_3^{2l+2}=0$ where $l=[\frac{n}{3}].$  This realizes possibility (\ref{2}) for $q=3.$ \\
		Clearly, $c_2^{n+1}\neq 0$ and $c_2^nc_3\neq 0$ is not possible.\\
		If $c_2^{n+1}\neq 0$ and $c_2^nc_3=0.$ Then, we get $c_2^{\frac{r}{2}}=c_2^{\frac{r-3j}{2}}c_3^j,j>1,$ which generates  $H^r(F)$ and $r$ and $j$ must be even.  Since, rk $H^*(F)=2n+2,$ so we must have $j=2$ and $r=2n+4,$ Hence, the cohomology ring $H^*(F)$ is generated by $c_2$ and $ c_3$ with $c_2^{n+3}=c_2^{3}+c_3^{2}=c_2^{n}c_3=0.$  This realizes possibility (\ref{6}) for $s=2~\&~q=3.$ \\
		If $H^1(F)\neq 0,$ then rk $H^*(F)>2n+2,$ for either $c_2=c_1^2$ or $c_2\neq c_1^2 .$ \\
		\textbf{Subcase(ii):} $c_4 \neq c_2^2$ \\
		In this subcase, we must have $H^1(F)\neq 0.$ It is easy to observed that  rk $H^*(F)>2n+2$ in both case either $c_2^2=0$ or $c_2^2\neq 0.$ \\
		\textbf{Case(6):}  If $B_2=0$ and $B_1=B_3=1,$ then $j^*(\alpha)=1 \otimes c_4+t\otimes c_3+t^3\otimes c_1.$\\
		In this case, if $H^*(F)$ has one generator then $F\sim_2 \mathbb{R}P^{2n+1}.$ Suppose that  $H^*(F)$ has two generators. Now we consider two subcases: (i) $c_3=c_1^3$ (ii) $c_3\neq c_1^3.$\\
		\textbf{Subcase(i):} $c_3=c_1^3.$\\
		First, suppose that  $c_4=c_1^4.$ Then, 
		we have 
		$j^*(\alpha^n) = \sum_{r=0}^{n}\sum_{k=0}^{n-r}{{n}\choose {r}}{{n-r}\choose {k}}t^{3k+r}\otimes c_1^{4n-3r-k}$ and 
		$$\hspace{-4cm}	j^*(\alpha^n \beta )=\sum_{r=0}^{n}\sum_{k=0}^{n-r}\sum_{i=1}^{m-1}{{n}\choose{r}}{{n-r}\choose{r}}t^{3r+r+i}\otimes c_1^{4n-3r+m-i}$$
		$$\hspace{6cm}+ \sum_{r=0}^{n}\sum_{k=0}^{n-r}\sum_{i=q}^{m-1}A_{l,j}{{n}\choose{r}}t^{3r+r+i}\otimes c_1^{4n-3r-k+(m-i-q)}d.$$
		Clearly, $c_1^{n+1}$ and $c_1^nd$ are the possible least degree elements.\\
		If $c_1^{n+1}=0,$ then $c_1^nd$ must be non zero. Thus, for $d^2=0,$ we get $F \sim_2 \mathbb{R}P^n \times \mathbb{S}^q, 1 \leq q\leq m.$ This realizes possibility (1) for $\mathbb{F}=\mathbb{R}.$ And for $d^2\neq 0 ,$ we have two possibility either $d^2=c_1^{2q}$ or $d^2=c_1^qd.$\\
		If $d^2=c_1^{2q},$ then by the change of basis $d'=d+c_1^q,$ we realizes possibility (1) for $\mathbb{F}=\mathbb{R}.$\\
		If $d^2=c_1^qd,$ then for  $1<q\leq n,$  $F$ does not satisfy poincar\'{e} duality. So, we must have $q=1.$ Again, by the change of basis $d'=c+d,$  we get $d'^{n+2}=d^{n+2}=d'^{n+1}+ d'^{n+1} = dd'=0.$ Thus, $F\sim_2 \mathbb{R}P^{n+1}\#\mathbb{R}P^{n+1}.$  This realizes possibility (2) for $\mathbb{F}=\mathbb{R}.$ \\
		Now, If $c_1^{n+1}\neq 0,$ then either $c_1^nd=0$ or $c_1^nd\neq 0..$\\
		Obviously, $c_1^nd\neq 0$ is not possible. Suppose that  $c_1^nd =0.$ If $c_1d=0,$ then we get $c_1^{r+1}=d^{\frac{r}{q}+1}=c_1d=c_1^r+d^{\frac{r}{q}}=0,$ where $r=\frac{q(2n+2)}{q+1}.$ So, $(q+1)|(2n+2),$ and hence, $n=(q+1)k-1$ for $q$ even $\&$ $n=(\frac{q+1}{2})k-1$ for $q$ odd, $k \in \mathbb{N}$. This realizes possibility (\ref{5}) for $s=1.$ Further, If $q=1,$ then $F\sim_2 \mathbb{R}P^{n+1}\# \mathbb{R}P^{n+1},$ if $q=2,$ then $F\sim_2 \mathbb{R}P^{4k}\# \mathbb{R}P^{2k}$ and  if $q=4,$ then $F\sim_2 \mathbb{R}P^{8k}\# \mathbb{H}P^{2k}.$  If $c_1d\neq  0$, then  we get $c_1^r=c_1^{r-qj}d^j,j>1$ which generates  $H^r(F).$  Thus,    $c_1^{qi}\neq d^i$ for $1\leq i \leq j-1$  and $c_1^{qj}= d^j$ where $qj<r.$ We get rk $H^*(F)=j(r-qj)+qj+j$ which must be  $2n+2$ so, $r= \frac{2n+2}{j}+qj-(q+1).$  Hence, the cohomology ring $H^*(F)$ is generated by $c_1$ and $ d$ with $c_1^{r+1}=c_1^{qj}+d^{j}=c_1^{r-qj+1}d=0,$ with $\frac{(q+1)j}{2}-1<n.$ This realizes possibility (\ref{6}) for $s=1.$\\
		Next, suppose that $c_4\neq c_1^4.$\\
		If $c_1^4=0,$ then this case possible only for $n=3,$ when $c_1^nc_4\neq 0~ \&~ c_4^2=0. $ Thus, $F\sim_2 \mathbb{R}P^3 \times \mathbb{S}^4.$ This realizes possibility (1) for $n=3~\&~ q=4.$\\
		If $c_1^4 \neq 0,$ then we have  $j^*(\alpha^n) = \sum_{r=0}^{n}\sum_{k=0}^{n-r}{{n}\choose {r}}{{n-r}\choose {k}}t^{n-r+2k}\otimes c_1^{3n-3r-2k}c_4^{r}$ and  $$j^*(\alpha^n\beta)= \sum_{i=0}^{m-1}\sum_{r=0}^{n}\sum_{k=0}^{n-r}{{n}\choose {r}}{{n-r}\choose {k}}t^{n-r+2k+i} \otimes (\oplus_{l+4j=m-i}c_1^{3n-3r-2k+l}c_4^{r+j}).$$ 
		Note that from above expression we get $c_1^{n+1}$ and $c_1^nc_4$ are the possible least degree elements.
		If  $c_1c_4=0,$ then we get $c_1^r=c_4^{\frac{r}{4}},$ where $r$ is formal dimension. So, $r=8k$ where $n\equiv 5k-1, k \in \mathbb{N}.$ Thus $F\sim_2 \mathbb{R}P^{8k} \# \mathbb{H}P^{2k}.$ This realizes possibility (\ref{5}) for $s=1~\&~ q=4.$ Now, Suppose that  $c_1c_4\neq 0.$  If $c_1^{n+1}=0,$ then we must have  $c_1^{n}c_4\neq 0 .$ Clearly, for  $c_4^2=0,$ we get $F\sim_2 \mathbb{R}P^n\times \mathbb{S}^4$  and for $c_4^2\neq 0,$  we must have  $c_4^2=c_1^8.$ By the change of basis $d'=c_1^4+c_4,$ we  realizes possibility (1) for $\mathbb{F}=\mathbb{R}$ and $q=4.$ \\
		Clearly, $c_1^{n+1}\neq 0$ and $c_1^nc_4\neq 0$ is not possible.\\
		If $c_1^{n+1}\neq 0$ and $c_1^nc_4=0.$ Then, we get $c_1^{r}=c_1^{r-4j}c_4^j,j>1,$ which generates  $H^r(F).$  Thus,    $c_1^{4i}\neq c_4^i$ for $1\leq i \leq j-1$  and $c_1^{4j}= c_4^j$ where $4j<r.$ We get rk $H^*(F)=j(r-4j)+5j$ which must be  $2n+2.$ Thus $r= \frac{2n+2}{j}+4j-5$ and  hence, the cohomology ring $H^*(F)$ is generated by $c_1$ and $ c_4,$ with $c_1^{r+1}=c_4^{4j}+c_4^{j}=c_1^{r-4j+1}c_4=0.$  This realizes possibility (\ref{6}) for $s=1~\&~q=4.$ \\
		\textbf{Subcase (ii):} $c_3\neq c_1^3.$\\
		As $H^*(F)$ has at most two generators so, we must have $c_4=c_1^4.$ Thus $j^*(\alpha^n) = \sum_{r=0}^{n}\sum_{k=0}^{n-r}{{n}\choose {r}}{{n-r}\choose {k}}t^{3k+r}\otimes c_1^{4n-4r-3k}c_3^{r}$ and  $$j^*(\alpha^n\beta)= \sum_{i=0}^{m-1}\sum_{r=0}^{n}\sum_{k=0}^{n-r}{{n}\choose {r}}{{n-r}\choose {k}}t^{3k+r+i} \otimes (\oplus_{l+3j=m-i}c_1^{4n-4r-3k+l}c_3^{r+j}).$$ 
		So, we get  $c_1^{n+1}$ and $c_1^nc_3$ are the possible least degree elements.
		If  $c_1c_3=0,$ then we get $c_1^r=c_3^{\frac{r}{3}},$ where $r$ is formal dimension. So, $r=3k$ and $n\equiv 2k-1, k \in \mathbb{N}.$ Thus, we have $c_1^{3k+1}=c_3^{k+1}=c_1c_3=c_1^{3k}+c_3^{k}, k \in \mathbb{N}.$ This realizes possibility (\ref{5}) for $s=1~\&~ q=3.$ Now, suppose that $c_1c_3\neq 0.$ If $c_1^{n+1}=0,$ then we must have  $c_1^{n}c_3\neq 0 .$ Clearly, for $c_3^2=0,$ we have $F\sim_2 \mathbb{R}P^n\times \mathbb{S}^3$  and for $c_3^2\neq 0,$  we must have  $c_3^2=c_1^6.$ By the change of basis $d'=c_1^3+c_3,$ we  realizes possibility (1) for $\mathbb{F}=\mathbb{R}$ and $q=3.$ \\
		Clearly, $c_1^{n+1}\neq 0$ and $c_1^nc_3\neq 0$ is not possible.\\
		If $c_1^{n+1}\neq 0$ and $c_1^nc_3=0.$ Then, we get $c_1^{r}=c_1^{r-3j}c_3^j,j>1,$ which generates  $H^r(F).$  Thus,    $c_1^{3i}\neq c_3^i$ for $1\leq i \leq j-1$  and $c_1^{3j}= c_3^j$ where $3j<r.$ Clearly, $r= \frac{2n+2}{j}+3j-4,$ where  either $n+1= jk,$ for some $k\in \mathbb{N}$ or $j=2.$     Hence, the cohomology ring $H^*(F)$ is generated by $c_1$ and $ c_3$ with $c_1^{r+1}=c_3^{3j}+c_3^{j}=c_1^{r-3j+1}c_3=0.$  This realizes possibility (\ref{6}) for $s=1~\&~q=3.$ \\
		\textbf{Case(7):}  If $B_3=0$ and $B_1=B_2=1,$ then $j^*(\alpha)=1 \otimes c_4+t^2\otimes c_2+t^3\otimes c_1.$\\
		If $H^*(F)$ has one generator then $F\sim_2 \mathbb{R}P^{2n+1}.$ Suppose that $H^*(F)$ has two generators. We consider two subcases: (i) $c_2=c_1^2$ (ii) $c_2\neq c_1^2.$\\
		\textbf{Subcase (i):}  $c_2=c_1^2.$\\
		First, assume that $c_4=c_1^4.$ We have
		$j^*(\alpha^n) = \sum_{r=0}^{n}\sum_{k=0}^{n-r}{{n}\choose {r}}{{n-r}\choose {k}}t^{2k+3r}\otimes c_1^{4n-3r-2k}$ and 
		$$\hspace{-2cm}	j^*(\alpha^n \beta )= \sum_{r=0}^{n}\sum_{k=0}^{n-r}\sum_{i=1}^{m-1}{{n}\choose{r}}{{n-r}\choose{r}}t^{2k+3r+i}\otimes c_1^{4n-3r-2k+(m-i)}$$
		$$\hspace{4cm}+ \sum_{r=0}^{n}\sum_{k=0}^{n-r}\sum_{i=q}^{m-1}A_{l,j}{{n}\choose{r}}t^{2k+3r+i}\otimes c_1^{4n-3r-2k+(m-i-q)}d.$$ Clearly, $c_1^{n+1}$ and $c_1^nd$ are the possible least degree elements.\\
		If $c_1^{n+1}=0,$ then $c_1^nd$ must be non zero. Thus, for $d^2=0,$ we get $F \sim_2 \mathbb{R}P^n \times \mathbb{S}^q, 1 \leq q\leq m.$ This realizes possibility (1) for $\mathbb{F}=\mathbb{R}.$ And for $d^2\neq 0 ,$ we get $F \sim_2 \mathbb{R}P^n \times \mathbb{S}^q, 1 \leq q\leq m$ and $F\sim_2 \mathbb{R}P^{n+1}\#\mathbb{R}P^{n+1},$ when $d^2=c_1^{2q}$ and $d^2=c_1^qd,$ respectively.\\
		Now, suppose that $c_1^{n+1}\neq 0,$ then either  $c_1^nd=0$  or $c_1^nd\neq 0.$\\
		Clearly, $c_1^nd\neq 0$ is not possible. Now, suppose that  $c_1^nd =0.$ Again, as same above case for  $c_1d=0,$ we have realizes possibility (\ref{5}) for $s=1.$ For $c_1d\neq 0,$  we have realizes possibility (\ref{6}) for $s=1.$ \\
		Next, suppose that  $c_4 \neq c_1^4.$\\
		If $c_1^4=0,$ then for $c_1^3=0,$ we get rk $H^*(F)\neq 2n+2,$ a contradiction. And for $c_1^3\neq 0,$ we must have  $n=3$ and $c_4^2=0.$ So, we have $c_2c_1^2 \neq 0,$ and hence $F\sim_2 \mathbb{R}P^3 \times \mathbb{S}^4.$ Let $c_4 \neq c_1^4\neq 0.$ Then,
		$j^*(\alpha^n) = \sum_{r=0}^{n}\sum_{k=0}^{n-r}{{n}\choose {r}}{{n-r}\choose {k}}t^{3k+r}\otimes c_1^{4n-4r-3k}c_3^{r}$ and  $$j^*(\alpha^n\beta)= \sum_{i=0}^{m-1}\sum_{r=0}^{n}\sum_{k=0}^{n-r}{{n}\choose {r}}{{n-r}\choose {k}}t^{2n+r-2k+i} \otimes (\oplus_{l+4j=m-i}c_1^{2n-r-2k+l}c_4^{r+j}).$$ 
		We get $c_1^{n+1}$ and $c_1^nc_4$ are the possible least degree elements.
		If $c_1c_4=0,$ then we get $c_1^r=c_4^{\frac{r}{4}},$ where $r$ is formal dimension. So, $r=8k$ and $n\equiv 5k-1, k \in \mathbb{N}.$ Thus, $c_1^{8k+1}=c_4^{k+1}=c_1c_4=c_1^{4k}+c_4^{k}, k \in \mathbb{N}.$ This realizes possibility (\ref{5}) for $s=1~\&~ q=4.$ Now, let  $c_1c_4\neq 0.$  If $c_1^{n+1}=0,$ then we must have  $c_1^{n}c_4\neq 0 .$ Clearly, for $c_4^2=0,$ we have $F\sim_2 \mathbb{R}P^n\times \mathbb{S}^3$  and for $c_4^2\neq 0,$ we must have  $c_4^2=c_1^6.$ By the change of basis $d'=c_1^4+c_4,$ we  realizes possibility (1) for $\mathbb{F}=\mathbb{R}$ and $q=4.$ \\
		Clearly, $c_1^{n+1}\neq 0$ and $c_1^nc_4\neq 0$ is not possible.\\
		If $c_1^{n+1}\neq 0$ and $c_1^nc_4=0.$ Then, we get $c_1^{r}=c_1^{r-4j}c_4^j,j>1,$  generates  $H^r(F).$  Thus,    $c_1^{4i}\neq c_4^i$ for $1\leq i \leq j-1$  and $c_1^{4j}= c_4^j,$ where $4j<r.$ Clearly, $r= \frac{2n+2}{j}+4j-5,$ where  either $n+1= jk,$ for some $k\in \mathbb{N}$ or $j=2.$   Hence, the cohomology ring $H^*(F)$ is generated by $c_1$ and $ c_4$ with $c_1^{r+1}=c_4^{4j}+c_4^{j}=c_1^{r-4j+1}c_4=0.$  This realizes possibility (\ref{6}) for $s=1~\&~q=4.$ \\
		\textbf{Subcase(ii):} $c_2\neq c_1^2.$\\
		Suppose that $c_1^2=0.$ Then, we must have  $c_4=c_2^2.$ So, 
		$j^*(\alpha^n) = \sum_{r=0}^{1}{{n}\choose {r}}(1\otimes c_2^2+t^2\otimes c_2)^{n-r}(t^3\otimes c_1)^r.$ Clearly, $c_2^n\neq 0.$ For $c_2^{n+1}=0,$ we get $F \sim_2 \mathbb{C}P^{n}\times \mathbb{S}^1.$ For $c_2^{n+1}\neq0,$ we get rk $H^*(F)>2n+2,$ a contradiction. \\
		Now suppose that $c_1^2\neq 0.$ Since, $H^*(F) $ has at most two generators so, we must have either  $c_4=c_2^2$ or $c_4=c_1^4.$ In both cases, we get $c_1^{n+1}$ and $c_1^nc_2$ are the least degree  element in the image of $j^*(\alpha^n\beta)$. \\
		If $c_1c_2=0,$ then we must have   $c_1^{n+1}\neq 0.$ Thus, $c_1^{r}=c_2^{\frac{r}{2}}$ is the generator of $H^r(F).$ This implies that  rk $H^*(F)={r}+{\frac{r}{2}}=2n+2.$ Thus, $r=4k,$ where $n=3k-1,$  $k \in \mathbb{N}.$ Hence, $F\sim_2 \mathbb{R}P^{4k}\# \mathbb{C}P^{2k}, k \in \mathbb{N}.$ This realizes possibility (\ref{5}) for $s=1~\&~q=2.$\\
		Now, suppose that $c_1c_2 \neq 0.$ If $c_1^{n+1}= 0,$ then $c_1^nc_2$ must be nonzero. By the change of basis $d=c_1^2+c_2.$ we get $F\sim_{2} \mathbb{R}P^n\times \mathbb{S}^2.$ This realizes possibility (\ref{1}) for $\mathbb{F}=\mathbb{R}$ and $q=2.$  \\
		If $c_1^{n+1}\neq 0$ and $c_1^nc_2 \neq 0,$ then rk $H^*(F)>2n+2,$ a contradiction.\\
		If $c_1^{n+1}\neq 0.$ Then, $c_1^{r}=c_1^{r-2j}c_2^j,$  $j>1$ forms  generator of $H^r(F).$ Which implies  that $c_1^{2j}=c_2^{j}$ is generator of $H^{2j}(F).$ Hence, the cohomology ring is given by $c_1^{r+1}=c_1^{2j}+c_2^{j}= c_1^{r-2j+1}c_2=0.$ This realizes possibility (\ref{6}) for $s=1~\&~ q=2.$
		
		\textbf{Case(8):}  If  $B_1=B_2=B_3=1,$ then $j^*(\alpha)=1 \otimes c_4+t \otimes c_3 +t^2\otimes c_2+t^3\otimes c_1.$\\
		If $H^*(F)$ has one generator then $F\sim_2 \mathbb{R}P^{2n+1}.$ Now, suppose that $H^*(F)$ has two generators. We consider two subcases: (i) $c_2=c_1^2$ (ii) $c_2\neq c_1^2.$\\
		\textbf{Subcase (i):} Assume that $c_2=c_1^2.$\\
		If $c_3=c_1^3$ and $c_4=c_1^4,$ then $j^*(\alpha^n)= (1 \otimes c_1^4 + t \otimes c_1^3 +t^2\otimes c_1^2+ t^3 \otimes c_1)^n.$ So, we must have $c_1^n \neq 0.$  This case is similar to Subcase (i) of Case (7), when $c_2=c_1^3~\&~ c_4=c_1^4.$\\ 
		Now, if $c_3=c_1^3$ and $c_4\neq c_1^4,$ then $j^*(\alpha^n) = \sum_{r=0}^{n}{{n}\choose {r}}(1\otimes c_4)^{n-r}(t\otimes c_1^3+t^2\otimes c_1^2+t^3\otimes c_1)^r.$ Further, if $c_1^4=0,$ then this holds only when $n=3$ and $c_4^2=0$ when $c_4^{n-2}c_1^3\neq 0.$ Thus $F\sim_2 \mathbb{R}P^3\times \mathbb{S}^4.$ \\
		If $c_1^4 \neq 0,$ then, $j^*(\alpha^n\beta)=$ $ \sum_{i=0}^{m-1}\sum_{r=0}^{n}\sum_{k=0}^{n-r}\sum_{k'=0}^{r}{{n}\choose {r}}{{n-r}\choose {k}}{{n-r-k}\choose {k'}}t^{n-r+k'+2k+i} \otimes (\oplus_{l+4j=m-i}c_1^{3n-3r-2k-k'+l}c_4^{r+j}).$ Clearly, $c_1^{n+1}$ and $c_1^nc_4$ are the possible least degree elements. This case is similar to  Subcase (i) when $c_4\neq c_1^4$ of  Case (6).\\
		If $c_3 \neq c_1^3,$ then $c_4=c_1^4.$ This case is similar to  Subcase (ii)  of  Case (6).\\
		\textbf{ Subcase (ii):} Assume that $c_2\neq c_1^2.$ We must have either $c_4=c_2^2$ or  $c_4\neq c_2^2.$ This  is similar to Subacse (ii) of case (7).\\
		Finally, if $c_4=0$ or \{$c_4\neq 0 ~\&~B_4=0$\}, then  by Proposition \ref{prop 2.4}, we must have $n=1.$ So, we get  rk $H^*(F)=4$ and   $j^*(\alpha)= B_1 t^3 \otimes c_1 + B_2 t^2 \otimes c_2+B_3 t \otimes c_3$  Clearly, if $H^*(F)$ has one generator then $F\sim_2 \mathbb{F}P^3, \mathbb{F}=\mathbb{R}$ or $\mathbb{C}.$ Suppose  $H^*(F)$ has two  generators then  clearly, $F\sim_{2} \mathbb{S}^r \times \mathbb{S}^q, 1\leq r \leq 3~\&~1 \leq q\leq m, $ or $F\sim_{2} \mathbb{F}P^2\# \mathbb{F}P^2, \mathbb{F}=\mathbb{R}$ or $\mathbb{C}.$ This realizes possibilities (1), (2) and (3) for $\mathbb{F}=\mathbb{R}$ or $\mathbb{C}$ and $n=1.$ 
	\end{proof}
	\begin{remark}
		For $n=1,$ we get $X \sim_2 \mathbb{S}^4 \times \mathbb{S}^m.$ By Theorem \ref{thm 3.3},  the possibilities of connected  fixed point sets  of involutions on $X$ are   $\mathbb{S}^r \times \mathbb{S}^q, 1\leq r \leq 4~\&~1 \leq q \leq m,$ or $\mathbb{F}P^3,\mathbb{F}=\mathbb{R}$ or $\mathbb{C},$ or  $ \mathbb{F}P^2 \# \mathbb{F}P^2, \mathbb{F}=\mathbb{R},$ $\mathbb{C}$ or $\mathbb{H}.$ These possibilities have also been realized in [Theorem 3.11, \cite{j1}].
	\end{remark}	
		
	\begin{remark}
		It is easy to observe that the fixed point sets of involutions on $X\sim_2 \mathbb{H}P^n \times \mathbb{S}^m,$  when  $X$ is not TNHZ in $X_G$ has mod $2$ cohomology of $q$-sphere, where $-1\leq q \leq 4n+m,$ under the assumptions that the associated Lerey-Serre spectral sequence of Borel fibration $ X \hookrightarrow X_G \rightarrow B_G$   is nondegenerate and the differentials $d_r$ of the spectral sequence satisfies $d_r(1\otimes b)=0~ \forall ~r \leq m,$ (See Theorem $3.5$ in \cite{dimpi}).\end{remark}
	
	Now, we give examples to realizes above Theorem.
	\begin{example}
		Let $G=\mathbb{Z}_2$ act  on $\mathbb{S}^m$ defined by $$(x_0,x_1,\cdots x_m) \mapsto (x_0,x_1,\cdots,x_q, -x_{q+1}, \cdots  -x_m) .$$
		If we consider trivial action of  $G$ on $ \mathbb{H}P^n,$   then  after taking diagonal action of $G$ on $\mathbb{H}P^n \times \mathbb{S}^m,$ we get fixed point set is $\mathbb{H}P^n \times \mathbb{S}^q,$ where $ 1\leq q \leq m.$\\
		If we take conjugation action of  $G$ on $ \mathbb{H}P^n, $   i.e $(z_0,z_1,\cdots z_n) \mapsto (\bar{z}_0,\bar{z}_1,\cdots\bar{ z}_n),$ then after taking diagonal action of $G$ on $\mathbb{H}P^n \times \mathbb{S}^m,$ we get fixed point set is $\mathbb{R}P^n \times \mathbb{S}^q,1\leq q \leq m.$ 	\\	 
		If  $G$ acts on  $\mathbb{H}P^n,$ define by $(z_0,z_1,\cdots z_n) \mapsto (i{z}_0,i{z}_1,\cdots i{ z}_n),$ then after taking diagonal action of $G$ on $\mathbb{H}P^n \times \mathbb{S}^m,$ we get fixed point set is $\mathbb{C}P^n \times \mathbb{S}^q,1\leq q \leq m.$ 
		This examples realizes possibility (1) of Theorem  \ref{thm 3.3}.\\
		Now, consider an action of $G$ on $\mathbb{S}^4$ defined by $(x_0,x_1,x_2,x_3, x_4) \mapsto (x_0,x_1,x_2,x_3,-x_4) .$ Then, the fixed point set of the diagonal action of $G$ on $\mathbb{S}^4\times \mathbb{S}^m$ is $\mathbb{S}^3\times \mathbb{S}^q, 1\leq q \leq m.$ This also realizes possibility (1) of Theorem \ref{thm 3.3} for $n=1.$
		
	\end{example}
	\begin{example}
		Bredon (\cite{Bredon}) constructed an example that  $\mathbb{P}^{2}(q)\# \mathbb{P}^2(q)$ (connected sum of projective spaces) is a fixed point set of an involution on $\mathbb{S}^4 \times 
		\mathbb{S}^{q+k},$ where  $k\geq 4.$ This example   realizes possibility (2) of Theorem \ref{thm 3.3} for $n=1.$ In this paper, Bredon also gave  examples of involutions on $X\sim_2 \mathbb{S}^n \times \mathbb{S}^m,n\leq m$ and $X\sim_2 \mathbb{S}^4 \times \mathbb{S}^m,4< m$ with the fixed point sets $F=\mathbb{R}P^3$ and   $F\sim_2 \mathbb{S}^{7},$ respectively.  These examples   realizes possibility (3) of Theorem  \ref{thm 3.3} for $n=1,$ and the case when $X\sim_2 \mathbb{H}P^1\times \mathbb{S}^m$  and  not TNHZ  in $X_G,$  respectively.
		
	\end{example}

	Next, we have discussed the cohomology ring of the orbit spaces of free involutions on a space $X$ having mod $2$ cohomology the  product of  quaternionic projective space and sphere $ \mathbb{H}P^n \times \mathbb{S}^m.$ For the existence of free involutions on $ \mathbb{H}P^n \times \mathbb{S}^m,$ consider the diagonal action on $\mathbb{H}P^n\times \mathbb{S}^m,$ by taking any involution on 	$\mathbb{H}P^n$ and antipodal action on $\mathbb{S}^m.$ \\
First, we have consider the case when $\pi_1(B_G)$ acts trivially on $H^*(X)$ under some assumptions on the associated Lerey-Serre spectral sequence of Borel fibration $ X \hookrightarrow X_G \rightarrow B_G.$ 
	 Note that \cite{dimpi} if $G=\mathbb{Z}_2$ act freely on $X\sim_2 \mathbb{H}P^n\times \mathbb{S}^m,$ then $\pi_1(B_G)$  acts trivially on $H^*(X)$ whenever one of the following holds: (1) $ 4n \leq m$ (2) $ 4=m<4n, n$ is even, (3) $4<m<2m\leq 4n, m\equiv 0$(mod $4$), and (4) $m\not \equiv 0$(mod $4$).
	\begin{theorem}\label{thm 4.3}
	Let $G=\mathbb{Z}_2$ act freely on finite CW-complex $X\sim_2\mathbb{H}P^n\times \mathbb{S}^m,$ where  $n,m\geq 1.$ Assume that $\pi_1(B_G)$ acts trivially on $H^*(X)$ and the differentials $d_r(1\otimes b)=0~\forall~ r\leq m.$ Then, the  cohomology ring of orbit space $H^*(X/G)$ is isomorphic to one of the following  graded commutative algebras: 	\begin{enumerate}
		\item $\mathbb{Z}_2[x,y,z]/I,$ where $I$ is homogeneous ideal given by:
		\begin{center}
			$<x^{5},y^{\frac{n+1}{2}}+a_0y^{\frac{4(n+1)-m}{8}}z+a_1x^{4}y^{\frac{4n-m}{8}}z+a_2z,z^{2}+a_3x^{2i}y^{\frac{m-i}{4}}+a_{4}x^{i'}y^{\frac{m-{i'}}{8}}z>,$ 
		\end{center}
		where  deg $x=1,$ deg $y=8~\&$ deg $z=m,$ $a_0=0$ if $m \not\equiv 0 $(mod $8$) or $m>4n+4;$ $a_1=0$ if $m \equiv 0 $(mod $8$) or $m>4n;$ $a_2=0$ if $m \neq 4(n+1);$
		$a_3=0$ if $m \not \equiv i$(mod $4$) or  \{$i=0$ and $2m > 4(n-1)\}, 0\leq 2i\leq 4$ and
		$ a_{4}=0$ if $m \not\equiv {i'} $(mod $8$) or $m>4n, 0\leq {i'}\leq 4,$  
		$a_k\in \mathbb{Z}_2, 0\leq k\leq 4,$  $n$ odd,	\vspace{0.5em}
		\item $\mathbb{Z}_2[x,y,z]/<x^{5},y^{\frac{n}{2}+1},z^2+a_0y+a_1x^4z>,$ where  deg $x=1,$ deg $y=8~\&$ deg $z=4,$ $a_0,a_1\in \mathbb{Z}_2,$ $n$  even, and

		\vspace{0.5em}
		\item  $\mathbb{Z}_2[x,y]/<x^{m+1},y^{n+1}+{\sum_{0<i\equiv 0 (mod ~4)}^{min \{4(n+1),m\}}}a_ix^{i}y^{\frac{4(n+1)-i}{4}}>,$ 
		where deg $x=1,$ deg $y=4$ and $a_i\in \mathbb{Z}_2.$   
	\end{enumerate}
	
\end{theorem}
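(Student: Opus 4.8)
The plan is to use that the action is free, so the orbit space is homotopy equivalent to the Borel space, $X/G\simeq X_G$, whence $H^*(X/G;\mathbb{Z}_2)\cong H^*(X_G;\mathbb{Z}_2)$, and to compute the right-hand side from the Leray--Serre spectral sequence of the Borel fibration $X\stackrel{i}\hookrightarrow X_G\stackrel{\pi}\rightarrow B_G$, with $H^*(B_G;\mathbb{Z}_2)=\mathbb{Z}_2[t]$, $\deg t=1$. Under the hypothesis that $\pi_1(B_G)$ acts trivially the coefficients are simple, so $E_2^{p,q}=\mathbb{Z}_2[t]\otimes H^*(X;\mathbb{Z}_2)=\mathbb{Z}_2[t]\otimes\mathbb{Z}_2[a,b]/\langle a^{n+1},b^2\rangle$. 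Since $G$ acts freely, $X/G$ is a finite CW-complex, so $H^*(X_G)$ vanishes in large degrees; hence the polynomial tower $\{t^p\}$ must be truncated by nonzero differentials, and the entire computation reduces to locating the differentials on the transgressive generators $1\otimes a$ and $1\otimes b$.

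Because $H^q(X)=0$ for $0<q<4$, the class $1\otimes a$ supports no differential before $d_5$, and $d_5(1\otimes a)\in E_5^{5,0}=\mathbb{Z}_2\,t^5$; likewise the assumption $d_r(1\otimes b)=0$ for $r\le m$ leaves $d_{m+1}(1\otimes b)\in E_{m+1}^{m+1,0}=\mathbb{Z}_2\,t^{m+1}$ as the first possible differential on $1\otimes b$. The first step is to decide, from finite-dimensionality and the derivation property alone, which of these is nonzero. If $d_5(1\otimes a)=t^5$ then $t^5=0$ on $E_6$, so $t^{m+1}=0$ and $1\otimes b$ can no longer transgress; the surviving permanent cocycle of lowest positive degree is then $\bar a^2$ in degree $8$, giving the degree-$8$ generator $y$ and the relation $x^5=0$ of cases (1) and (2). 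If instead $d_5(1\otimes a)=0$, then $1\otimes a$ survives as the degree-$4$ generator $y$, and the tower can only be cut off by $d_{m+1}(1\otimes b)=t^{m+1}$, producing $x^{m+1}=0$ and case (3). Propagating $d_5$ as a derivation and counting cycles and boundaries on $E_5$ shows that, in the first scenario, the residual tower $\{t^p\otimes a^n\}$ is infinite exactly when $n$ is even; finiteness then forces the extra differential $d_5(1\otimes b)=t^5$ (possible only for $m=4$, where $\deg b=4$), and this cleanly separates case (1), $n$ odd, from case (2), $n$ even, with $z=\overline{a+b}$.

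With the differentials fixed I would read off $E_\infty=E_6$ (respectively $E_{m+2}$ in case (3)) as a bigraded algebra: a truncated polynomial algebra on $x=t$, on $y=\bar a^{2}$ or $\bar a$, and on the surviving $b$-class $z$, whose top relations $y^{(n+1)/2}=0$, $y^{n/2+1}=0$, or $y^{n+1}=0$ descend from $a^{n+1}=0$, and $z^2=0$ descends from $b^2=0$. It then remains to resolve the multiplicative extensions, i.e.\ to determine the correction terms that the relations $z^2=\cdots$, $y^{(n+1)/2}=\cdots$, $y^{n+1}=\cdots$ acquire in $H^*(X_G)$ over the associated graded. Each admissible correction is controlled by a degree count, which forces $a_i=0$ unless the stated congruences on $m$ modulo $4$ or $8$ hold and unless the exponents such as $\tfrac{4(n+1)-m}{8}$ and $\tfrac{m-i}{4}$ are nonnegative integers; a direct check confirms that every displayed vanishing condition on the $a_i$ (for instance $a_2=0$ unless $m=4(n+1)$, and $a_0=0$ unless $m\equiv 0\pmod 8$) is precisely the integrality requirement on the corresponding monomial. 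The remaining coefficients are then pinned down using the Poincar\'e duality of $X/G$ in formal dimension $4n+m$ together with the Steenrod squares, via $\mathrm{Sq}^1t=t^2$ and $\mathrm{Sq}^4a=a^2$.

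The main obstacle I anticipate is this last step: the extension problem is not formal, and exhibiting exactly the listed monomials while ruling out all others by degree and by $\mathrm{Sq}$-compatibility is where the bookkeeping is heaviest. A secondary but essential point is the rigorous verification, through the derivation property and the cycle/boundary count on $E_5$, that the parity of $n$ dictates whether the auxiliary differential $d_5(1\otimes b)=t^5$ is forced; this is what partitions the truncated-$a$ regime into cases (1) and (2), while the alternative $d_5(1\otimes a)=0$ isolates case (3).
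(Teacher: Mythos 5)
Your overall plan --- replace $X/G$ by $X_G$, run the Leray--Serre spectral sequence of $X\hookrightarrow X_G\to B_G$ with $E_2=\mathbb{Z}_2[t]\otimes H^*(X)$, decide the transgressions of $1\otimes a$ and $1\otimes b$, then settle the multiplicative extensions by degree counting, Poincar\'e duality and Steenrod squares --- is exactly the approach the paper intends (the paper gives no details of its own, deferring to the proofs of Theorems 4.2 and 4.5 of \cite{dimpi}, which are of this type). However, the step that produces case (2) fails. You argue: if $d_5(1\otimes a)=t^5$ and $n$ is even, the tower $\{t^p\otimes a^n\}$ survives, so finiteness forces $d_5(1\otimes b)=t^5$ (hence $m=4$), and the $d_5$-kernel, generated by $z=\overline{a+b}$, yields case (2). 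This configuration cannot occur: $d_5$ is a derivation on $E_5=E_2=\mathbb{Z}_2[t]\otimes\mathbb{Z}_2[a,b]/\langle a^{n+1},b^2\rangle$, so $0=d_5\bigl((1\otimes a)^{n+1}\bigr)=(n+1)(1\otimes a^{n})\,d_5(1\otimes a)$, and when $n$ is even and $d_5(1\otimes a)=t^5$ the right-hand side is $t^5\otimes a^{n}\neq 0$, a contradiction. Hence for $n$ even one necessarily has $d_5(1\otimes a)=0$; turning on $d_5(1\otimes b)$ repairs nothing, since the contradiction involves $a$ alone. Your computation with $z=\overline{a+b}$ manipulates a map that is not a well-defined differential: by Leibniz $(a+b)^{n+1}$ would be a $d_5$-cycle, yet for $n$ even $(a+b)^{n+1}=a^{n}b$ and $d_5(1\otimes a^{n}b)=t^5\otimes a^{n}\neq 0$. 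So the $n$-even regime lands in your third branch (case (3)), and your asserted parity dichotomy ``(1) for $n$ odd, (2) for $n$ even'' is unsound. In fact the ring listed in case (2) can never equal $H^*(X/G)$: its top nonzero degree is $4n+8>4n+4=\dim X$, and its Euler characteristic is $n+2$, whereas a free involution forces $\chi(X/G)=\chi(X)/2=n+1$; the theorem remains true only because it is a disjunction and that disjunct is vacuous, but a proof must not purport to realize it by a spectral-sequence computation.

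A second gap concerns small $m$. Your claim that $1\otimes a$ supports no differential before $d_5$ rests on $H^q(X)=0$ for $0<q<4$, which fails for $m\le 3$: there $d_{5-m}(1\otimes a)=t^{5-m}\otimes b$ is an additional possibility, and it is not excluded by the hypothesis $d_r(1\otimes b)=0$ for $r\le m$. Likewise, for $m\le 3$ the transgression $d_{m+1}(1\otimes b)$ fires on an earlier page than $d_5$, so your ordering argument (``$t^5=0$ on $E_6$, hence $1\otimes b$ can no longer transgress'') is valid only for $m\ge 4$. These low-$m$ configurations must be eliminated separately --- again by the Leibniz constraint when $n$ is even, and by finiteness together with multiplicativity of the later pages when $n$ is odd --- before your case list can be called exhaustive.
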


	Finally, we have consider the case  when $\pi_1(B_G)$ acts nontrivally on $H^*(X).$ \begin{theorem}\label{thm 4.2}
		Let $G=\mathbb{Z}_2$ act freely on a finite CW-complex $X\sim_2\mathbb{H}P^n\times \mathbb{S}^m,$ where $ n, m\geq 1.$ Assume that $\pi_1(B_G)$ acts nontrivially on $H^*(X).$ Then, $H^*(X/G)$ is isomorphic to one of the following graded commutative  algebras:
		\begin{enumerate}
			\item ${\mathbb{Z}_2[x,y,z]}/{<x^{9},y^2+a_0z+a_1x^{8},z^{\frac{n+1}{2}}+a_2x^{8}z^{\frac{n-1}{2}},xy>,}$
			where  deg $x=1,$ deg $y=4,\&~$ deg $z=8, a_i\in \mathbb{Z}_2, 0\leq i \leq 2,m=4<4n,$  $n$ odd, and 
			\item ${\mathbb{Z}_2[x,y,z,w_{k}]}/{<x^{5},y^{\frac{m}{8}}+a_0w_1,z^{2},xw_{k},{w_kw_{k+i}}+a_{k,i}x^{4d}y^{\frac{2m-4n+4q}{8}}z>}, ~$
			where$~$ deg $x=1,$ deg $y=8,$ deg $z=4n+4~\&$ deg $w_{k}=m+4(k-1),$ $1\leq k\leq \frac{4n-m+4}{4},$ and  $0\leq i\leq \frac{4n-m}{4},$  $-1\leq q(odd)\leq \frac{4n-m-8}{4} ,d=0,1,~\&~ $  $n$  odd, $4<m<4n<2m,m\equiv 0$ (mod $8$) and $a_{k,i}=0$ if $\frac{4(n+2)-m}{4}<2k+i;a_0$ and ${a_{k,i}}'s$ are in $\mathbb{Z}_2.$
			If $d=0,$ then $i$ is even and $q=2k+i-3.$ If $d=1,$ then $i$ is odd and $q=2k+i-4.$ 
		\end{enumerate}
		\end{theorem}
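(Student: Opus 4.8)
Since $G$ acts freely, the map $X_G=(X\times E_G)/G\to X/G$ is a fibration with contractible fibre $E_G$, so $X/G\simeq X_G$ and it suffices to compute $H^*(X_G;\mathbb{Z}_2)$ from the Leray--Serre spectral sequence of the Borel fibration $X\hookrightarrow X_G\to B_G$, where $B_G=\mathbb{R}P^\infty$ and $H^*(B_G;\mathbb{Z}_2)=\mathbb{Z}_2[t]$, $\deg t=1$. Because $\pi_1(B_G)=\mathbb{Z}_2$ acts nontrivially, the coefficients form a local system $\mathcal H^q(X)$ and, since $B_G=B\mathbb{Z}_2$, one has $E_2^{p,q}=H^p(\mathbb{Z}_2;H^q(X))$. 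First I would pin down the involution $\tau=g^*$ on $H^*(X)=\mathbb{Z}_2[a,b]/\langle a^{n+1},b^2\rangle$: as a graded ring automorphism it must fix $a$ when $m\neq4$ (the unique degree-$4$ class), so the only nontrivial choice is $\tau(b)=b+a^{m/4}$, which exists exactly when $m\equiv0\ (\mathrm{mod}\ 4)$ and $m/4\le n$; when $m=4$ one instead gets $\tau(a)=a+b,\ \tau(b)=b$, and the requirement $\tau(a)^{n+1}=0$ forces $n$ odd. This already accounts for the standing hypotheses in the two cases.

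Next I would decompose $H^*(X)$ as a $\mathbb{Z}_2[\mathbb{Z}_2]$-module using the basis $\{a^i,a^ib\}$: the pairs sharing a common degree combine into free summands $\mathbb{Z}_2[\mathbb{Z}_2]$ (for instance $\{a^{i+m/4},a^ib\}$ with fixed vector $a^{i+m/4}$), while the unpaired classes---the low powers $a^j$ with $j<m/4$ and the top classes $a^ib$ whose partner $a^{i+m/4}$ vanishes---are trivial summands. Since $H^p(\mathbb{Z}_2;\mathbb{Z}_2[\mathbb{Z}_2])=0$ for $p>0$ whereas $H^p(\mathbb{Z}_2;\mathbb{Z}_2)=\mathbb{Z}_2$ for all $p$, on $E_2$ each free summand contributes a single class in the edge column $p=0$ and each trivial summand contributes a full horizontal $t$-tower. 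I would record the exact rows: the $t$-towers sit in a bottom block (degrees $0,4,\dots,m-4$) and a top block (degrees $4n+4,\dots,4n+m$), while the free survivors occupy the middle degrees $m,m+4,\dots,4n$.

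The engine of the computation is that $X/G$ is finite-dimensional, forcing every $t$-tower to truncate. I would compute the resulting differentials degree by degree: the only possible differential out of a degree-$4k$ trivial class transgresses into the row-$0$ tower (the intermediate rows vanish), so Leibniz together with $\tau(a^2)=a^2$ shows the even powers of $a$ survive while the odd powers transgress, yielding $d_5$ with consequence $x^5=0$ in case (2) and, via the higher tower, $x^9=0$ in case (1); a free-summand fixed vector whose $d_5$-target lands in another free row is automatically a permanent cocycle and becomes one of the generators $w_k$. The survivors are then $x=t$, a power $y=a^2$, a top class $z$, and the $w_k$, and what remains is a collection of extension problems in $H^*(X_G)$; resolving the products---chiefly $y^{m/8}$ against $w_1$, the vanishing $z^2$, and the pairings $w_kw_{k+i}$---produces the undetermined constants $a_i,a_{k,i}\in\mathbb{Z}_2$, and sorting by the residue of $m$ modulo $8$ and the parity of $n$ delivers exactly the two families.

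I expect the main obstacle to be this final step: deciding which free-summand survivors are indecomposable (genuine generators $w_k$ rather than products $y^{j}w_1$), evaluating the cup products $w_kw_{k+i}$, and showing the extension terms can only be the listed monomials $x^{4d}y^{(2m-4n+4q)/8}z$ under the stated constraints on $d$ and $q$. The differential bookkeeping, although forced by finiteness, must be pushed through every congruence class to verify that no further survivors or relations arise.
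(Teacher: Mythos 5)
Your framework is the right one, and it is in fact the approach the paper relies on: the paper proves nothing here but cites Theorem 4.5 of the companion paper \cite{dimpi}, whose argument is exactly what you outline --- identify $X/G\simeq X_G$, run the Leray--Serre spectral sequence of $X\hookrightarrow X_G\to B_G$ with local coefficients $E_2^{p,q}=H^p(\mathbb{Z}_2;H^q(X))$, split $H^*(X)$ into free and trivial $\mathbb{Z}_2[\mathbb{Z}_2]$-summands (free summands contribute only to the column $p=0$, trivial summands give $t$-towers), and use finite-dimensionality of $X/G$ to force the differentials, then settle the multiplicative extensions. Your identification of the two possible nontrivial involutions on $H^*(X)$ and of the block structure of $E_2$ is essentially correct, so as scaffolding this matches the (outsourced) proof.

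However, there are genuine gaps. First, your classification of the involutions is stated incorrectly: for $\tau(b)=b+a^{m/4}$ to be a ring map you also need $\tau(b)^2=\tau(b^2)=0$, i.e.\ $a^{m/2}=0$, i.e.\ $4n<2m$. Your ``exactly when $m\equiv 0 \ (\mathrm{mod}\ 4)$ and $m/4\le n$'' omits this, yet it is precisely the source of the standing hypothesis $4<m<4n<2m$ in case (2); as written, your case analysis would pursue actions (e.g.\ with $2m\le 4n$) that cannot exist. Second, your differential argument is only valid for case (1): in case (2) the rows $4,8,\dots,m-4$ do \emph{not} vanish, and $d_5(a^j)=ja^{j-1}t^5$ lands in row $4j-4$, not row $0$; moreover you never rule out the competing pattern $d_5(a)=0$ with the towers instead killed by transgressions from the top block, and you never derive the constraints $m\equiv 0\ (\mathrm{mod}\ 8)$ and $n$ odd that appear in the statement. (For instance, when $m\equiv 4\ (\mathrm{mod}\ 8)$ the fixed vector $a^{m/4}$ is an \emph{odd} power of $a$, so by Leibniz it transgresses, and one must show the spectral sequence then cannot terminate in a finite-dimensional $E_\infty$; this is how that congruence is excluded.) Third, everything that actually constitutes the theorem --- the relations $z^2=0$, $xw_k=0$, $y^{m/8}+a_0w_1=0$, the admissible monomials $x^{4d}y^{(2m-4n+4q)/8}z$ with the stated parity rules for $d$, $i$, $q$, and the truncations $x^5$ resp.\ $x^9$ --- is deferred as ``the main obstacle.'' Since the conclusion of the theorem \emph{is} this list of presentations, a write-up that sets up the spectral sequence but does not push the bookkeeping through the congruence cases and extension problems is a correct strategy, not a proof.
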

The proofs of the above Theorems are similar to proofs of Theorem $4.2$ and Theorem $4.5$ in \cite{dimpi}, respectively.

	\begin{remark}
	If  $a_i=0, 0\leq i \leq 2$ in possibility (1) of the Theorem \ref{thm 4.2}, then $X/G\sim_2  (\mathbb{R}P^{8} \vee \mathbb{S}^4) \times\mathbb{P}^{\frac{n-1}{2}}(8).$  
		If $a_i=0~\forall~ 0\leq i \leq 4,$ in possibility (1) of Theorem \ref{thm 4.3}, then $X/G\sim_2 \mathbb{R}P^4\times \mathbb{P}^{\frac{j}{2}}(8) \times \mathbb{S}^m, j=n-1$ for $n$ odd and $j=n$ for $n$ even. If $a_i=0~ \forall~ 0<i\equiv 0(mod~4)\leq min \{4(n+1),m\}$ in the possibility (3), then $X/G\sim_2 \mathbb{R}P^m \times \mathbb{H}P^n.$
	\end{remark}
	\begin{example}
	Let $T: \mathbb{H}P^n \times  \mathbb{S}^m \rightarrow \mathbb{H}P^n \times  \mathbb{S}^m $ be a map defined by $([z],x)\mapsto ([z],-x).$ Then, this gives a free involution on $\mathbb{H}P^n \times  \mathbb{S}^m.$  The orbit space of this action is $(\mathbb{H}P^n \times  \mathbb{S}^m)/\mathbb{Z}_2 \sim_2 \mathbb{H}P^n \times  \mathbb{R}P^m.$  This realizes possibility (3) of Theorem \ref{thm 4.3}, for $a_i=0~\forall ~i.$
	\end{example}

	\bibliographystyle{plain}

\end{document}